\newtheorem{thm}[equation]{Theorem}
\numberwithin{equation}{section}
\newtheorem{defin}[equation]{Definition}
\newtheorem{prop}[equation]{Proposition}
\newtheorem{fig}[equation]{Figure}
\begin{document}
\raggedbottom \voffset=-.7truein \hoffset=0truein \vsize=8truein
\hsize=6truein \textheight=8truein \textwidth=6truein
\baselineskip=18truept

\def\mapright#1{\ \smash{\mathop{\longrightarrow}\limits^{#1}}\ }
\def\mapleft#1{\smash{\mathop{\longleftarrow}\limits^{#1}}}
\def\mapup#1{\Big\uparrow\rlap{$\vcenter {\hbox {$#1$}}$}}
\def\mapdown#1{\Big\downarrow\rlap{$\vcenter {\hbox {$\ssize{#1}$}}$}}
\def\mapne#1{\nearrow\rlap{$\vcenter {\hbox {$#1$}}$}}
\def\mapse#1{\searrow\rlap{$\vcenter {\hbox {$\ssize{#1}$}}$}}
\def\mapr#1{\smash{\mathop{\rightarrow}\limits^{#1}}}
\def\ss{\smallskip}
\def\s{\sigma}
\def\l{\lambda}
\def\vp{v_1^{-1}\pi}
\def\at{{\widetilde\alpha}}
\def\sm{\wedge}
\def\la{\langle}
\def\ra{\rangle}
\def\ev{\text{ev}}
\def\od{\text{od}}
\def\on{\operatorname}
\def\ol#1{\overline{#1}{}}
\def\spin{\on{Spin}}
\def\cat{\on{cat}}
\def\lbar{\ell}
\def\qed{\quad\rule{8pt}{8pt}\bigskip}
\def\ssize{\scriptstyle}
\def\a{\alpha}
\def\bz{{\Bbb Z}}
\def\Rhat{\hat{R}}
\def\im{\on{im}}
\def\ct{\widetilde{C}}
\def\ext{\on{Ext}}
\def\sq{\on{Sq}}
\def\eps{\epsilon}
\def\ar#1{\stackrel {#1}{\rightarrow}}
\def\br{{\bold R}}
\def\bC{{\bold C}}
\def\bA{{\bold A}}
\def\bB{{\bold B}}
\def\bD{{\bold D}}
\def\bC{{\bold C}}
\def\bh{{\bold H}}
\def\bQ{{\bold Q}}
\def\bP{{\bold P}}
\def\bx{{\bold x}}
\def\bo{{\bold{bo}}}
\def\dh{\widehat{d}}
\def\si{\sigma}
\def\Vbar{{\overline V}}
\def\dbar{{\overline d}}
\def\wbar{{\overline w}}
\def\Sum{\sum}
\def\tfrac{\textstyle\frac}
\def\tb{\textstyle\binom}
\def\Si{\Sigma}
\def\w{\wedge}
\def\equ{\begin{equation}}
\def\b{\beta}
\def\G{\Gamma}
\def\L{\Lambda}
\def\g{\gamma}
\def\d{\delta}
\def\k{\kappa}
\def\psit{\widetilde{\Psi}}
\def\tht{\widetilde{\Theta}}
\def\psiu{{\underline{\Psi}}}
\def\thu{{\underline{\Theta}}}
\def\aee{A_{\text{ee}}}
\def\aeo{A_{\text{eo}}}
\def\aoo{A_{\text{oo}}}
\def\aoe{A_{\text{oe}}}
\def\vbar{{\overline v}}
\def\endeq{\end{equation}}
\def\sn{S^{2n+1}}
\def\zp{\bold Z_p}
\def\cR{{\mathcal R}}
\def\P{{\mathcal P}}
\def\cQ{{\mathcal Q}}
\def\cj{{\cal J}}
\def\zt{{\bold Z}_2}
\def\bs{{\bold s}}
\def\bof{{\bold f}}
\def\bq{{\bold Q}}
\def\be{{\bold e}}
\def\Hom{\on{Hom}}
\def\ker{\on{ker}}
\def\kot{\widetilde{KO}}
\def\coker{\on{coker}}
\def\da{\downarrow}
\def\colim{\operatornamewithlimits{colim}}
\def\zphat{\bz_2^\wedge}
\def\io{\iota}
\def\om{\omega}
\def\Prod{\prod}
\def\e{{\cal E}}
\def\zlt{\Z_{(2)}}
\def\exp{\on{exp}}
\def\abar{{\overline a}}
\def\xbar{{\overline x}}
\def\ybar{{\overline y}}
\def\zbar{{\overline z}}
\def\mbar{{\overline m}}
\def\nbar{{\overline n}}
\def\sbar{{\overline s}}
\def\kbar{{\overline k}}
\def\bbar{{\overline b}}
\def\et{{\widetilde E}}
\def\ni{\noindent}
\def\tsum{\textstyle \sum}
\def\coef{\on{coef}}
\def\den{\on{den}}
\def\lcm{\on{l.c.m.}}
\def\vi{v_1^{-1}}
\def\ot{\otimes}
\def\psibar{{\overline\psi}}
\def\thbar{{\overline\theta}}
\def\mhat{{\hat m}}
\def\exc{\on{exc}}
\def\ms{\medskip}
\def\ehat{{\hat e}}
\def\etao{{\eta_{\text{od}}}}
\def\etae{{\eta_{\text{ev}}}}
\def\dirlim{\operatornamewithlimits{dirlim}}
\def\gt{\widetilde{L}}
\def\lt{\widetilde{\lambda}}
\def\st{\widetilde{s}}
\def\ft{\widetilde{f}}
\def\sgd{\on{sgd}}
\def\lfl{\lfloor}
\def\rfl{\rfloor}
\def\ord{\on{ord}}
\def\gd{{\on{gd}}}
\def\rk{{{\on{rk}}_2}}
\def\nbar{{\overline{n}}}
\def\MC{\on{MC}}
\def\lg{{\on{lg}}}
\def\cH{\mathcal{H}}
\def\cS{\mathcal{S}}
\def\cP{\mathcal{P}}
\def\N{{\Bbb N}}
\def\Z{{\Bbb Z}}
\def\Q{{\Bbb Q}}
\def\R{{\Bbb R}}
\def\C{{\Bbb C}}

\def\mo{\on{mod}}
\def\xt{\times}
\def\notimm{\not\subseteq}
\def\Remark{\noindent{\it  Remark}}
\def\kut{\widetilde{KU}}

\def\*#1{\mathbf{#1}}
\def\0{$\*0$}
\def\1{$\*1$}
\def\22{$(\*2,\*2)$}
\def\33{$(\*3,\*3)$}
\def\ss{\smallskip}
\def\ssum{\sum\limits}
\def\dsum{\displaystyle\sum}
\def\la{\langle}
\def\ra{\rangle}
\def\on{\operatorname}
\def\proj{\on{proj}}
\def\od{\text{od}}
\def\ev{\text{ev}}
\def\o{\on{o}}
\def\U{\on{U}}
\def\lg{\on{lg}}
\def\a{\alpha}
\def\bz{{\Bbb Z}}
\def\eps{\varepsilon}
\def\bc{{\bold C}}
\def\bN{{\bold N}}
\def\bB{{\bold B}}
\def\bW{{\bold W}}
\def\nut{\widetilde{\nu}}
\def\tfrac{\textstyle\frac}
\def\b{\beta}
\def\G{\Gamma}
\def\g{\gamma}
\def\zt{{\Bbb Z}_2}
\def\zth{{\bold Z}_2^\wedge}
\def\bs{{\bold s}}
\def\bx{{\bold x}}
\def\bof{{\bold f}}
\def\bq{{\bold Q}}
\def\be{{\bold e}}
\def\lline{\rule{.6in}{.6pt}}
\def\xb{{\overline x}}
\def\xbar{{\overline x}}
\def\ybar{{\overline y}}
\def\zbar{{\overline z}}
\def\ebar{{\overline \be}}
\def\nbar{{\overline n}}
\def\ubar{{\overline u}}
\def\bbar{{\overline b}}
\def\et{{\widetilde e}}
\def\lf{\lfloor}
\def\rf{\rfloor}
\def\ni{\noindent}
\def\ms{\medskip}
\def\Dhat{{\widehat D}}
\def\what{{\widehat w}}
\def\Yhat{{\widehat Y}}
\def\abar{{\overline{a}}}
\def\minp{\min\nolimits'}
\def\sb{{$\ssize\bullet$}}
\def\mul{\on{mul}}
\def\N{{\Bbb N}}
\def\Z{{\Bbb Z}}
\def\S{\Sigma}
\def\Q{{\Bbb Q}}
\def\R{{\Bbb R}}
\def\C{{\Bbb C}}
\def\Xb{\overline{X}}
\def\eb{\overline{e}}
\def\notint{\cancel\cap}
\def\cS{\mathcal S}
\def\cR{\mathcal R}
\def\el{\ell}
\def\TC{\on{TC}}
\def\GC{\on{GC}}
\def\wgt{\on{wgt}}
\def\wpt{\widetilde{p_2}}
\def\dstyle{\displaystyle}
\def\Om{\Omega}
\def\ds{\dstyle}
\def\tz{tikzpicture}
\def\zcl{\on{zcl}}
\def\bd{\bold{d}}
\def\Vb#1{{\overline{V_{#1}}}}
\title
{Geodesic complexity for non-geodesic spaces}
\author{Donald M. Davis}
\address{Department of Mathematics, Lehigh University\\Bethlehem, PA 18015, USA}
\email{dmd1@lehigh.edu}

\date{May 27, 2021}

\keywords{geodesic, configuration space, topological robotics}
\thanks {2000 {\it Mathematics Subject Classification}: 53C22, 55R80, 55M30, 68T40.}

\maketitle
\begin{abstract} We define the notion of near geodesic between points of a metric space when no geodesic exists, and use this to extend Recio-Mitter's notion of geodesic complexity to non-geodesic spaces. This has potential application to topological robotics. We determine explicit near geodesics and geodesic complexity in a variety of cases.
\end{abstract}
\section{Introduction}\label{intro} In \cite{RM}, Recio-Mitter defined the geodesic complexity $\GC(X)$ of a metric space $X$  to be the smallest number $k$ such that $X\times X$ can be partitioned into $k+1$ locally compact sets $E_i$, $0\le i\le k$,  such that on each $E_i$ there is a continuous map $s_i:E_i\to PX$, called a geodesic motion planning rule (GMPR) on $E_i$, such that, for all $(x_0,x_1)\in E_i$, $s_i(x_0,x_1)$ is a (minimal) geodesic from $x_0$ to $x_1$. This was an analogue of Farber's notion of topological complexity $TC(X)$ (\cite{Far}), which applied to any topological space $X$ and did not require that $s_i(x_0,x_1)$ be a geodesic. Clearly $\TC(X)\le \GC(X)$ for any metric space $X$. These notions are of particular interest if $X$ is a space of configurations of one or more robots.

A {\it geodesic} space is one in which for all pairs $(x_0,x_1)$ of points, there is a geodesic from $x_0$ to $x_1$. According to the definition in \cite{RM}, $\GC(X)=\infty$ if $X$ is not geodesic. In \cite{D} and \cite{DHR}, some non-geodesic spaces $X$ were replaced by homotopically-equivalent geodesic spaces, whose GC was computed and interpreted as representing $\GC(X)$. This seems reasonable since TC is a homotopy invariant.

Let $F(X,2)$ denote the space of ordered pairs of distinct points of $X$, with the induced metric from $X\times X$.  In \cite{D}, we replaced the non-geodesic space $F(\R^n,2)$ by the homotopically-equivalent geodesic space $F_\eps(\R^n,2)$, consisting of points $(x_0,x_1)$ satisfying $d(x_0,x_1)\ge\eps$. We determined explicit geodesics in $F_\eps(\R^n,2)$, but the work was quite complicated.


 For a topological space $Y$, $P(Y)=Y^I$ denotes the free path space of continuous maps $I\to Y$, with the compact-open topology, and $P(Y;y_0,y_1)$ the subspace consisting of paths from $y_0$ to $y_1$.

\begin{defin}\label{nogeo} Let $X$ be a metric space whose completion $\Xb$ is geodesic. For $x_0,x_1\in X$, a {\em near-geodesic} from $x_0$ to $x_1$ is a continuous map $\phi:I\to P(\Xb;x_0,x_1)$ satisfying
\begin{itemize}
\item[i.] $\phi(0)$ is a geodesic in $\Xb$ from $x_0$ to $x_1$;
\item[ii.] $\phi((0,1])\subset P(X;x_0,x_1)$;
\item[iii.] if $s_n\to0$, then $\operatorname{length}(\phi(s_n))\to\operatorname{length}(\phi(0))$.\end{itemize}\end{defin}

Here length of a path is defined (as in \cite{RM}) to be the supremum of sums of distances between successive points on the path. Item (iii) of the above definition guarantees that for values of $s$ close enough to 0 the paths $\phi(s)$ in $X$ are good approximations to the geodesic in $\Xb$. If there is a geodesic in $X$ from $x_0$ to $x_1$, one can use the constant homotopy, but we still call it a near geodesic for uniformity.

\begin{defin}\label{NGMPRdef}
For $E\subset X\times X$, a {\em near-geodesic motion planning rule} (NGMPR) on $E$ is a continuous map $\Phi$ from $E$ to $P(\Xb)^{I}$ such that, for all $(x_0,x_1)\in E$, $\Phi(x_0,x_1)$ is a near-geodesic from $x_0$ to $x_1$, with the additional proviso that $\Phi(x_0,x_1)(0)$ is a geodesic in $X$ if one exists. The {\em geodesic complexity} $\GC(X)$ is defined as the smallest $k$ such that $X\times X$ can be partitioned into locally compact sets $E_0,\ldots,E_k$ such that each $E_i$ has an NGMPR. \end{defin}

The following elementary proposition suggests that this is a good definition.
\begin{prop}\label{equal}If $X$ is geodesic, then the definition of $\GC(X)$ in Definition \ref{NGMPRdef}  agrees with that in \cite{RM}.\end{prop}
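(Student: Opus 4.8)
The plan is to observe that both definitions partition $X\times X$ into the same kind of data — finitely many locally compact sets $E_0,\dots,E_k$ — and differ only in the motion-planning rule demanded on each $E_i$. So it suffices to show that, for a fixed locally compact $E\subseteq X\times X$, there is a GMPR on $E$ in the sense of \cite{RM} if and only if there is an NGMPR on $E$. The two collections of admissible values of $k$ then coincide, and hence so do the two minima defining $\GC(X)$. Throughout I use that, since $X$ is geodesic, every pair $(x_0,x_1)$ admits a minimal geodesic in $X$, so the proviso in Definition \ref{NGMPRdef} forces $\Phi(x_0,x_1)(0)$ to be a genuine geodesic in $X$ for each $(x_0,x_1)\in E$. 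I also use that the inclusion $X\hookrightarrow\Xb$ is isometric, so that a minimal geodesic in $X$ is a minimal geodesic in $\Xb$, and the induced inclusion $PX\hookrightarrow P(\Xb)$ is a topological embedding for the compact-open topologies.

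First I would treat the forward direction. Given a GMPR $s\colon E\to PX$, I define $\Phi\colon E\to P(\Xb)^{I}$ by letting $\Phi(x_0,x_1)$ be the constant homotopy $t\mapsto s(x_0,x_1)$. Conditions (i)--(iii) of Definition \ref{nogeo} and the proviso of Definition \ref{NGMPRdef} are then immediate: $\Phi(x_0,x_1)(0)=s(x_0,x_1)$ is a minimal geodesic in $X$, hence in $\Xb$; the entire homotopy stays in $P(X;x_0,x_1)$; and the length is independent of $t$, so (iii) holds trivially. Continuity of $\Phi$ follows from continuity of $s$ followed by the inclusion of $P(\Xb)$ into $P(\Xb)^{I}$ as constant homotopies. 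Thus any partition realizing a value $k$ in the sense of \cite{RM} realizes the same $k$ in the sense of Definition \ref{NGMPRdef}, giving one of the two inequalities.

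The reverse direction is just as short. Given an NGMPR $\Phi\colon E\to P(\Xb)^{I}$, I set $s=\ev_0\circ\Phi$, that is, $s(x_0,x_1)=\Phi(x_0,x_1)(0)$. By the proviso, $s(x_0,x_1)$ is a minimal geodesic in $X$ from $x_0$ to $x_1$; in particular its image lies in $X$, so $s$ takes values in $PX$. Continuity of $s$ follows from continuity of $\Phi$ and of evaluation at $0$, combined with the fact that $PX\hookrightarrow P(\Xb)$ is an embedding. Hence $s$ is a GMPR in the sense of \cite{RM}, which yields the opposite inequality. Combining the two directions gives the claimed equality of the two values of $\GC(X)$.

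The main obstacle — really the only non-formal point — is the topological bookkeeping for the two compact-open topologies, so that is the step I would write out most carefully: that $\ev_0\colon P(\Xb)^{I}\to P(\Xb)$ is continuous, that $PX$ carries the subspace topology inherited from $P(\Xb)$ (so that ``$s$ lands in $PX$'' is correctly detected), and that the constant-homotopy inclusion is continuous. Each of these is standard once one knows that $X\hookrightarrow\Xb$ is an isometric embedding, so I expect no genuine difficulty here, only the need for care.
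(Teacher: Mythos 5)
Your proof is correct and takes essentially the same route as the paper's (two-line) proof: the forward direction via constant homotopies, and the reverse direction via evaluation at $0$, where the proviso in Definition \ref{NGMPRdef} is exactly what guarantees that $\Phi(x_0,x_1)(0)$ is a geodesic in $X$ rather than merely in $\Xb$. Your additional bookkeeping about the compact-open topologies and the isometric embedding $X\hookrightarrow\Xb$ is sound but is left implicit in the paper.
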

\begin{proof} If $s$ is a GMPR on $E$, then using the constant homotopy yields an NGMPR on $E$. If $\Phi$ is an NGMPR on $E$, then $\Phi|E\times 0$ is a GMPR.\end{proof}

One might prefer that the ``additional proviso'' in Definition \ref{NGMPRdef} be omitted. We have included it so that Proposition \ref{equal} is true. It is conceivable that this proposition might still be true if that requirement were omitted, but we have not been able to prove this.
Note that $\TC(X)\le \GC(X)$ since if $\Phi$ is an NGMPR on $E$, then the map $E\to P(X)$ defined by $(x_0,x_1)\mapsto \Phi(x_0,x_1)(\frac12)$ is a motion planning rule on $E$.

In this paper, we show that  $\GC(X)=\TC(X)$ for the following non-geodesic spaces $X$ by constructing explicit NGMPRs.
\begin{itemize}
\item
$\R^n-Q$ with $n\ge2$ and $Q$ a finite subset,
\item $F(\R^n,2)$ with $n\ge2$,
\item  $F(\R^n-\{x_0\},2)$ with $n\ge2$,
\item the unordered configuration space $C(\R^2-\{x_0\},2)$,  \item $F(Y,2)$, where $Y$ is a graph with exactly one essential vertex, of order 3.
\end{itemize}
 In Section \ref{sec4}, we study $\GC(X)$ for $X=F(\R^n-Q,2)$ if $Q$ is a finite subset with at least two points.  This might be a case in which $\GC(X)>\TC(X)$.

We thank David Recio-Mitter  and a referee for  insightful comments.
\section{Some examples with $\GC=\TC$}
In this section, we determine explicit NGMPRs in the first four of the five examples with $\GC(X)=\TC(X)$ listed above. In each of these examples, we find it convenient to let $g:I\to I$ denote a continuous function such as $g(t)=\sin(\pi t)$ or $g(t)=1-|2t-1|$  satisfying $g^{-1}(0)=\{0,1\}$.

\begin{defin} The nogeo set of $X$ is  the set of $(x_0,x_1)\in X\times X$ for which there is no (minimal) geodesic from $x_0$ to $x_1$. Its complement is the geoset of $X$.\end{defin}

\begin{thm} \label{thm1} For $n\ge2$ and $x_0\in\R^n$, $$\GC(\R^n-\{x_0\})=\TC(\R^n-\{x_0\})=\begin{cases}1&n\text{ even}\\ 2&n\text{ odd.}\end{cases}$$\end{thm}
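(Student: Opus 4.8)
The strategy is to squeeze $\GC$ between $\TC$ and an explicit upper bound. Since $\TC(X)\le\GC(X)$ for every metric space (noted after Definition \ref{NGMPRdef}), the lower bound reduces to computing $\TC(\R^n-\{x_0\})$. As $\R^n-\{x_0\}$ deformation retracts onto a round sphere $S^{n-1}$ and $\TC$ is a homotopy invariant, $\TC(\R^n-\{x_0\})=\TC(S^{n-1})$, which by Farber's computation \cite{Far} is $1$ when $n-1$ is odd and $2$ when $n-1$ is even; this is exactly the asserted value, and it bounds $\GC$ below. It therefore remains to construct a partition of $(\R^n-\{x_0\})\times(\R^n-\{x_0\})$ into the matching number of locally compact pieces, each carrying an NGMPR.

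For the upper bound, normalize $x_0=0$ and set $\hat x=x/|x|\in S^{n-1}$, so that $\Xb=\R^n$. The nogeo set is exactly $N=\{(x,y):\hat y=-\hat x\}$, the pairs whose straight segment meets the origin; its complement, the geoset $G$, is open and hence locally compact, and on it the straight segment $s(x,y)(t)=(1-t)x+ty$ is the unique minimal geodesic and depends continuously on $(x,y)$, so by the argument of Proposition \ref{equal} the constant homotopy upgrades it to an NGMPR. Over $N$ no geodesic in $X$ exists (the additional proviso of Definition \ref{NGMPRdef} is vacuous there), and the minimal geodesic in $\Xb$ is the straight segment through $0$, of length $|x|+|y|$. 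To build a near-geodesic I would fix a nowhere-zero tangent vector field $W$ on the relevant part of $S^{n-1}$ and, for the near-geodesic parameter $a\in(0,1]$, route the path along the two legs from $x$ to the off-axis point $a\,W(\hat x)$ and from there to $y$, parametrized by scaled arclength so that $a=0$ recovers the constant-speed straight segment. Conditions (i) and (ii) of Definition \ref{nogeo} are then immediate --- the independence of $\hat x$ and $W(\hat x)$ forces each leg to miss the origin for $a>0$ --- while condition (iii) follows from $\sqrt{|x|^2+a^2}+\sqrt{|y|^2+a^2}\to|x|+|y|$ as $a\to0$.

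The parity of $n$ enters solely through whether $W$ can be chosen continuously on all of $S^{n-1}$. When $n$ is even, $S^{n-1}$ is an odd-dimensional sphere and admits a global nowhere-zero tangent vector field, so one NGMPR covers all of $N$; together with $G$ this partitions the product into two locally compact pieces and gives $\GC\le1$. When $n$ is odd no such global field exists, so I would split $N$ along a fixed $e\in S^{n-1}$ into the closed piece $N_2=\{(x,y)\in N:\hat x=-e\}$, on which $\hat x$ is constant and $W$ may be taken constant, and the complementary piece $N_1=\{(x,y)\in N:\hat x\ne -e\}$, homeomorphic to $(S^{n-1}\setminus\{-e\})\times(0,\infty)^2$, on which a nowhere-zero field is obtained by pulling back a constant field through stereographic projection. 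Both $N_1,N_2$ are locally compact (one open in the closed set $N$, the other closed in the product), so $G,N_1,N_2$ give $\GC\le2$.

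The conceptual crux is this vector-field count, which is exactly what makes the answer depend on the parity of $n$ and mirrors Farber's sphere argument at the geometric level. The genuinely delicate, if routine, points are verifying the length-continuity condition (iii) of Definition \ref{nogeo} together with the joint continuity of the assembled map $E\to P(\Xb)^I$, and checking that each piece of the partition is locally compact in the subspace topology; these, rather than any single hard idea, are where the care is needed.
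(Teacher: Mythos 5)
Your proposal is correct and follows essentially the same route as the paper: the lower bound via $\TC(S^{n-1})$ and homotopy invariance, the identification of the nogeo set as the antipodal-direction pairs, near-geodesics built from a unit tangent vector field on $S^{n-1}$, and the parity-dependent splitting of the nogeo set at the single direction where the field fails when $n$ is odd. The only cosmetic difference is the formula for the near-geodesic: the paper perturbs the linear path additively by $s\cdot g(t)\cdot v\bigl(\frac{b-a}{\|b-a\|}\bigr)$ with $g^{-1}(0)=\{0,1\}$, whereas you route a piecewise-linear detour through the waypoint $a\,W(\hat x)$; both verify conditions (i)--(iii) of Definition \ref{nogeo} in the same way.
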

\begin{proof} Here $\Xb=\R^n$.  By a linear homeomorphism of $\R^n$, we may assume $x_0=0$. The nogeo set $E$ is $\{(a,b):a=\l b,\ \l<0\}$. We use linear paths as a GMPR on the geoset. (In such cases, the constant homotopy gives an NGMPR. We will not usually mention this.)

If $n$ is even, let $v$ be a unit vector field on $S^{n-1}$. An NGMPR on $E$ is given by
\begin{equation}\Phi(a,b)(s)(t)=(1-t)a+tb+s\cdot g(t)\cdot v\bigl(\frac{b-a}{\|b-a\|}\bigr).\label{veq}\end{equation}
If $n$ is odd, let $v$ be a unit vector field on $S^{n-1}-\{e_1\}$ and let $$E'=\{(a,b):b-a=ke_1,k>0\}.$$ Then (\ref{veq}) is an NGMPR on $E-E'$, while $\Phi(a,b)(s)(t)=(1-t)a+tb+s\cdot g(t)\cdot e_2$ is an NGMPR on $E'$. Thus  $\GC(\R^n-\{0\})\le 1$ (resp.~2) if $n$ is even (resp.~odd). Equality follows from the well-known value (\cite{Far}) of $\TC(S^{n-1})$ since $\R^n-\{0\}\simeq S^{n-1}$.\end{proof}

Recall that two subsets are {\em topologically disjoint} if the closure of each is disjoint from the other. Then continuous maps on each  can be combined on the union. This notion will be used in the next and subsequent proofs.

\begin{thm}\label{Q} For $n\ge2$, let $X=\R^n-Q$, where $Q$ is a finite set with at least two points. Then $\GC(X)=\TC(X)=2$.\end{thm}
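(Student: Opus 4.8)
The plan is to prove the two equalities separately: first the homotopy-theoretic value $\TC(X)=2$, and then the geometric bound $\GC(X)\le 2$. Since $\TC(X)\le\GC(X)$ always (as noted after Definition \ref{NGMPRdef}), the two combine to give the theorem.

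First I would compute $\TC(X)$. Writing $Q=\{q_1,\dots,q_k\}$ with $k\ge2$, the space $X=\R^n-Q$ deformation retracts onto a wedge $\bigvee_{i=1}^k S^{n-1}$ of spheres, one around each deleted point. For the lower bound I use the zero-divisor cup-length in $H^*(X\times X)$. Let $a_i\in H^{n-1}(X)$ be the class linking $q_i$; since $X$ is a wedge of spheres, all products $a_ia_j$ vanish. Setting $\overline{a}_i=a_i\otimes1-1\otimes a_i$, a direct expansion (with the Koszul sign $(-1)^{(n-1)^2}$) gives $\overline{a}_1\,\overline{a}_2=-a_1\otimes a_2-(-1)^{(n-1)^2}a_2\otimes a_1$, which is nonzero because $a_1\otimes a_2$ and $a_2\otimes a_1$ are distinct basis elements. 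Hence a product of two zero-divisors is nonzero and $\TC(X)\ge2$. For the upper bound, $X$ is homotopy equivalent to a wedge of spheres, so $\cat(X)=1$, whence $\TC(X)\le\cat(X\times X)\le2\cat(X)=2$. Thus $\TC(X)=2$, and in particular $\GC(X)\ge2$.

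It remains to construct a partition of $X\times X$ into three locally compact sets, each carrying an NGMPR; here $\Xb=\R^n$. As in Theorem \ref{thm1}, the geoset $E_0$ (pairs whose straight segment avoids $Q$) is open in $X\times X$, hence locally compact, and straight-line paths give a GMPR, hence an NGMPR, on it. The nogeo set consists of pairs $(x_0,x_1)$ whose open segment meets $Q$; every blocking point then lies on the line $L$ through $x_0,x_1$. I would cover the nogeo set by two locally compact sets $E_1,E_2$ on which an NGMPR is produced by perturbing $\gamma(t)=(1-t)x_0+tx_1$ perpendicular to its direction $u=(x_1-x_0)/\|x_1-x_0\|$, in the spirit of (\ref{veq}): push $\gamma$ off $L$ by $s\,g(t)$ times a suitably scaled unit vector $v\perp u$. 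The virtue of a strictly perpendicular push is that every blocking point, lying on $L$, is automatically avoided for all $s\in(0,1]$, exactly as in the single-point computation, since the perturbation and the displacement $\gamma(t)-q$ are orthogonal and cannot cancel.

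The main obstacle, and what forces a partition into exactly two pieces (in agreement with the lower bound $\TC(X)=2$), is to route the perturbed path around the remaining, \emph{non-blocking}, points of $Q$ while keeping it in $X$ for \emph{every} $s\in(0,1]$ and varying both $v$ and the scale continuously. Two features drive this. First, $v$ must be a continuous unit field perpendicular to $u$; such a field exists on all of $S^{n-1}$ when $n$ is even but only off a point when $n$ is odd, so a partition of the direction sphere is already needed, as in Theorem \ref{thm1}. Second, and more seriously, as a non-blocking point $q'$ approaches the segment from the side into which we are pushing, the scale of the perturbation would be forced to $0$ to keep $\phi((0,1])\subset P(X)$, which is impossible while keeping $\gamma$ itself — passing through a blocking point — out of $X$; this is precisely the discontinuity that no single rule can absorb. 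The remedy is to partition the nogeo set according to which perpendicular side is safe, pushing $\gamma$ away from any point of $Q$ transitioning onto the segment, and to check that the two resulting sets are locally compact, topologically cover the nogeo set, and support continuous NGMPRs with continuously varying direction and scale. Organizing these choices into just two pieces and verifying continuity at the configurations where the collection of blocking points changes is the crux of the argument; granting it, $X\times X=E_0\cup E_1\cup E_2$ gives $\GC(X)\le2$ and completes the proof.
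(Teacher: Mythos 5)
Your computation of $\TC(X)=2$ is correct (the paper simply cites Farber for this), and the basic mechanism you propose --- pushing the linear path perpendicular to its direction by $s\cdot g(t)$ times a scaled tangent vector field, so that blocking points on the line are automatically avoided --- is exactly the paper's mechanism. But the heart of the theorem is the actual construction of NGMPRs on a partition of the nogeo set into two locally compact pieces, and this is precisely the step you do not carry out: you correctly identify the discontinuity (the scale must shrink to $0$ as a non-blocking point of $Q$ approaches the segment, yet at the limiting configuration, where that point has become a blocking point, the scale jumps back to a positive value), but your proposed remedy --- partitioning the nogeo set ``according to which perpendicular side is safe'' and pushing away from any point of $Q$ ``transitioning onto the segment'' --- is not viable. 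For $n\ge3$ the complement of a line in $\R^n$ is connected, so ``sides'' of the segment are not even defined; and for $n=2$ several points of $Q$ can approach the segment from both sides at once, so no continuous choice of side exists on the set you describe. Your closing sentence (``granting it, \dots completes the proof'') concedes the crux rather than proving it.

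The paper's partition uses a different, discrete invariant: $E_1$ consists of pairs whose segment contains exactly one point of $Q$, and $E_2$ of pairs whose segment contains two or more. On both pieces one uses the \emph{same} formula, a perpendicular push of magnitude $\delta(a,b)\cdot s\cdot g(t)$ with $\delta(a,b)=\frac12\min\bigl(1,d(ab,Q-(Q\cap ab))\bigr)$; the direction of the push never needs to depend on where the non-blocking points lie, because $\delta$ keeps the perturbation smaller than half the distance to all of them. The point is that $\delta$ is continuous on $E_1$ and on $E_2$ \emph{separately}: within $E_1$, the single blocking point of a convergent sequence stabilizes and remains the blocking point of the limit, so $Q\cap ab$ does not jump; the discontinuity you identified occurs exactly when singly-blocked segments converge to a multiply-blocked one, and such sequences and their limits lie in different pieces of the partition. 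Two further issues your sketch omits must also be handled: when $Q$ is collinear, all $E_2$-segments lie along the $Q$-line, and one replaces the $\delta$-scaled field by the fixed perpendicular direction $v\bigl((q_2-q_1)/\|q_2-q_1\|\bigr)$; and when $n$ is odd, the exceptional direction $V$ of the vector field on $S^{n-1}-\{V\}$ is chosen not to be realizable as a direction between points of $Q$, so the singly-blocked configurations with direction $V$ are topologically disjoint from the genuinely multiply-blocked ones and can be absorbed into $E_2$, keeping the total number of domains at three.
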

\begin{proof} Again $\Xb=\R^n$.
First let $n$ be even.
We split the nogeo set as $E_1\cup E_2$, where $E_1$ is the set of $(a,b)$ for which exactly one point of $Q$ lies on the segment $ab$, while $E_2$ is those for which two or more points of $Q$ lie on the segment. An NGMPR on $E_1$ is given by
\begin{equation}\label{abe}\Phi(a,b)(s)(t)=(1-t)a+tb+\delta\cdot s\cdot g(t)\cdot v\bigl(\frac{b-a}{\|b-a\|}\bigr),\end{equation}
where $\delta=\delta(a,b)=\frac12\min(1,d(ab,Q-(Q\cap ab)))$, a continuous function on $X\times X$.  If it is not the case that $Q$ is collinear, then the function (\ref{abe}) works on $E_2$, as well.
Note that this function would not be continuous on $E_1\cup E_2$ because of the role of $\delta$. [\![If $(a_n,b_n)\in E_1$,  with all segments $a_nb_n$ passing through the same point $x\in Q$, have the property that $a_n\to a$, $b_n\to b$, and $d(a_nb_n,x')\to0$ for some $x'\in Q\cap ab$ with $x'\ne x$, then $\delta(a_n,b_n)\to 0$ but $(a,b)\in E_2$ has $\delta(a,b)\ne0$.]\!] If $Q$ is collinear, and $q_1$ and $q_2$ are two of the points in $Q$, then we can use
$$\Phi(a,b)(s)(t)=(1-t)a+tb+s\cdot g(t)\cdot v\bigl(\frac{q_2-q_1}{\|q_2-q_1\|}\bigr)$$
on $E_2$.

If $n$ is odd, let $V$ be a unit vector which is not realizable as $(x'-x)/\|x'-x\|$ for any $x,x'\in Q$, and let $v$ be a unit vector field on $S^{n-1}-\{V\}$. Let $E_1$ be the set of $(a,b)$ for which exactly one point of $Q$ lies on the segment $ab$, and $(b-a)/\|b-a\|\ne V$, and define $\Phi$ on $E_1$ using (\ref{abe}). Let $E_2$ be the set of $(a,b)$ for which two or more points of $Q$ lie on $ab$ or exactly one point of $Q$ lies on $ab$ and $(b-a)/\|b-a\|= V$. These two portions of $E_2$ are topologically disjoint. Define $\Phi(a,b)$ on $E_2$ using (\ref{abe}), interpreting $v(V)$ to be any particular vector orthogonal to $V$. Modify as above if $Q$ is collinear.

Using linear geodesics on the geoset, we obtain $\GC(X)\le 2$, and it must equal 2 since $\TC(X)=2$ (\cite[Lemma 10.2]{Far2}). \end{proof}

\begin{thm} For $n\ge2$, $\GC(F(\R^n,2))=\TC(F(\R^n,2))=\begin{cases}1&n\text{ even}\\ 2&n\text{ odd.}\end{cases}$\end{thm}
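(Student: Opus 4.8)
The completion of $F(\R^n,2)$ is all of $\R^n\times\R^n=\R^{2n}$ (the omitted points form the diagonal, a set of limit points), so $\Xb=\R^{2n}$ is geodesic and the straight segment is the unique minimal geodesic between any two of its points. My plan is to identify the nogeo set explicitly, observe that it is governed entirely by the ``difference coordinate,'' and then reuse the vector-field construction of Theorem \ref{thm1} almost verbatim.

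For $(a_0,a_1),(b_0,b_1)\in F(\R^n,2)$ the straight segment in $\R^{2n}$ joining them has difference coordinate $\Delta(t)=(1-t)(a_1-a_0)+t(b_1-b_0)$, and this vanishes for some $t\in[0,1]$ exactly when $a_1-a_0$ and $b_1-b_0$ are negative scalar multiples of one another. Hence the straight segment is a genuine geodesic of $F(\R^n,2)$ off the nogeo set $E=\{(a_1-a_0)/\|a_1-a_0\|=-(b_1-b_0)/\|b_1-b_0\|\}$, and, as in the earlier proofs, no minimal geodesic exists on $E$ since every path in $F(\R^n,2)$ is at least as long as the excluded straight segment. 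The geoset is open in the locally compact space $F(\R^n,2)\times F(\R^n,2)$, and $E$ is closed, so both are locally compact; on the geoset I use the straight segment (with the constant homotopy) as an NGMPR.

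The point is that $E$ depends only on the difference $d=x_1-x_0$, just as the nogeo set of Theorem \ref{thm1} depends only on its argument; in the coordinates $c=\tfrac12(x_0+x_1)$, $d=x_1-x_0$ the center $c$ is unconstrained, so I move it linearly and perturb $d$ off the origin exactly as in \eqref{veq}. Concretely, on $E$ I propose
\[
\Phi\big((a_0,a_1),(b_0,b_1)\big)(s)(t)=\Big((1-t)a_0+tb_0,\ (1-t)a_1+tb_1+s\,g(t)\,v(u)\Big),\quad u=\tfrac{a_1-a_0}{\|a_1-a_0\|},
\]
for a unit tangent vector field $v$ on the relevant sphere. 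Its difference coordinate is $(1-t)(a_1-a_0)+t(b_1-b_0)+s\,g(t)\,v(u)$; on $E$ the first two terms are a scalar multiple of $u$, so when $v(u)\perp u$ this is nonzero for every $t$ once $s>0$ (the orthogonal part is nonzero wherever $g(t)>0$, and the endpoints are nonzero), which gives (ii), while (i) and (iii) hold because $\Phi(s)\to\Phi(0)$ in $C^1$ as $s\to0$. For $n$ even, $S^{n-1}$ is odd-dimensional and admits a global nonvanishing unit tangent field, giving a single NGMPR on all of $E$ and hence $\GC\le1$. For $n$ odd I mimic the odd case of Theorem \ref{thm1}: choose $v$ on $S^{n-1}-\{V\}$, split $E$ into the (open) part where $u\ne V$ and the (closed, topologically separate) part where $u=V$, using the formula above on the former and a fixed vector orthogonal to $V$ on the latter, so $\GC\le2$.

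The matching lower bounds come from topological complexity: since $F(\R^n,2)\simeq S^{n-1}$ and $\TC(X)\le\GC(X)$, Farber's values of $\TC(S^{n-1})$ give $\GC\ge1$ for $n$ even and $\GC\ge2$ for $n$ odd. I expect the step needing the most care to be the verification of condition (iii): because it is a statement about convergence of lengths, it is cleanest to check it directly from the $C^1$ convergence of the explicit family $\Phi(s)$ rather than by transporting near-geodesics through the (length-distorting) linear change of coordinates, which I use only as a conceptual bridge to Theorem \ref{thm1}.
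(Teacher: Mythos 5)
Your proposal is correct and follows essentially the same route as the paper: identify the nogeo set as the pairs whose difference vectors are negative multiples of one another, use linear geodesics with the constant homotopy on its complement, perturb the second component by $s\,g(t)\,v(\cdot)$ with a tangent vector field on $S^{n-1}$ for $n$ even, and split off the exceptional direction with a fixed orthogonal vector for $n$ odd, with lower bounds from $\TC(S^{n-1})$. The only (immaterial) difference is that you evaluate $v$ at the normalized difference of the starting pair rather than of the ending pair, and you add the ``difference coordinate'' reduction to Theorem \ref{thm1} as conceptual scaffolding.
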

\begin{proof} We have $\Xb=\R^n\times\R^n$. Since $F(\R^n,2)\simeq S^{n-1}$, it suffices to prove the upper bound.
As noted in \cite{D}, the nogeo set $E$ is $\{((a,a'),(b,b')):b-b'=\l(a-a'),\ \l<0\}$, and we use linear geodesics on its complement.

 If $n$ is even, an NGMPR on $E$ is given by
\begin{equation}\label{bb}\Phi((a,a'),(b,b'))(s)(t)=\bigl((1-t)a+tb,(1-t)a'+tb'+s\cdot g(t)\cdot v\bigl(\frac{b'-b}{\|b'-b\|}\bigr)\bigr).\end{equation}
\ni For a point on a path in the homotopy of (\ref{bb}) to have both components equal would require that $v\bigl(\frac{b'-b}{\|b'-b\|}\bigr)$ is a scalar multiple of $b'-b$, which cannot happen.

If $n$ is odd, decompose the nogeo set into subsets determined by whether or not  $(b'-b)/\|b'-b\|=e_1$.
We use a vector field on $S^{n-1}-\{e_1\}$ in (\ref{bb}) for one, and can replace the $v(-)$ expression by $e_2$ for the other.
\end{proof}
\begin{thm}\label{Rx} If $X=F(\R^n-\{x_0\},2)$, then $\GC(X)=\TC(X)=\begin{cases}3&n\text{ even}\\4&n\text{ odd.}\end{cases}$\end{thm}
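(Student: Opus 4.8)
The plan is to obtain the lower bound from $\TC$ and the upper bound by explicit NGMPRs, exactly as in the previous theorems. Translating so that $x_0=0$, the Fadell--Neuwirth fibration $F(\R^n,3)\to\R^n$ (remembering only the first point) has contractible base $\R^n$ and fibre $F(\R^n-\{0\},2)$, so $X=F(\R^n-\{0\},2)\simeq F(\R^n,3)$. The topological complexity of $F(\R^n,3)$ is known to be $3$ for $n$ even and $4$ for $n$ odd \cite{FY} (consistently with the zero-divisor cup length of the Arnold classes $A_{12},A_{13},A_{23}$ in degree $n-1$), so $\TC(X)$ has the asserted value, and since $\TC\le\GC$ it remains to partition $X\times X$ into $4$ (resp.\ $5$) locally compact pieces each carrying an NGMPR.

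Here $\Xb=\R^n\times\R^n$ is Euclidean, hence geodesic, and for $((a,a'),(b,b'))\in X\times X$ the unique geodesic of $\Xb$ is the segment $\gamma(t)=((1-t)a+tb,(1-t)a'+tb')$. It lies in $X$ unless on $(0,1)$ the first point meets $0$, the second meets $0$, or the two points collide; as in Theorem \ref{thm1} and the preceding theorem these are the antiparallelism conditions (I) $b=\mu a$, $\mu<0$; (II) $b'=\mu'a'$, $\mu'<0$; (III) $b-b'=\l(a-a')$, $\l<0$. On the geoset I use linear geodesics with the constant homotopy, and the nogeo set is the union $E_{\mathrm I}\cup E_{\mathrm{II}}\cup E_{\mathrm{III}}$ of three closed antiparallelism loci.

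For the nogeo set I would perturb the segment by $\gamma(t)+s\,g(t)\,W(t)$, where $W=(w_1,w_2)\in\R^n\times\R^n$ is assembled from a unit vector field $v$ on $S^{n-1}$ evaluated at the track directions $u_1=a/\|a\|$, $u_2=a'/\|a'\|$, $u_3=(a-a')/\|a-a'\|$; for $n$ even $v$ is a nowhere-zero tangent field, so $v(u)\perp u$ and a perturbation $v(u_i)$ is transverse to the $i$-th collision line. Perturbing the first point alone then clears (I) (and, transversality permitting, (III)), while perturbing the second alone clears (II) (and (III)). I therefore partition the nogeo set by the presence of the origin-obstructions: $P_1=\{(\mathrm{II})\text{ absent}\}$ (perturb the first point), $P_2=\{(\mathrm{II})\text{ present},\ (\mathrm{I})\text{ absent}\}$ (perturb the second), and $P_3=\{(\mathrm{I})\text{ and }(\mathrm{II})\text{ present}\}$ (perturb both). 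Each is the intersection of a closed locus with an open one, hence locally compact, and together with the geoset they give $\GC(X)\le3$ for $n$ even. For $n$ odd, $v$ is only defined on $S^{n-1}-\{V\}$, so as in Theorem \ref{thm1} I add one further piece consisting of the configurations for which a relevant track direction equals $V$, on which a fixed vector orthogonal to $V$ is used as substitute perturbation; the sub-loci for the different tracks can be arranged as in Theorems \ref{thm1} and \ref{Q} so that a single extra piece suffices, giving $\GC(X)\le4$. Condition (iii) of Definition \ref{nogeo} is automatic since each path differs from $\gamma$ by an $O(s)$ term.

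The main obstacle is clearing \emph{co-occurring} obstructions with a single continuous rule on each piece while holding the count to $3$ (resp.\ $4$). Concretely, on the overlap loci where (I) and (III), (II) and (III), or all three hold, one needs the single perturbing vector (or the difference $w_1-w_2$) to be simultaneously nonzero and transverse to two moving collision lines---for instance $v(u_1)\not\parallel u_3$, or $v(u_1)-v(u_2)\neq0$ and $\not\parallel u_3$---and these transversality failures must be shown to form lower-dimensional sets that are absorbed continuously, with $\Phi$ remaining continuous on each (non-compact) piece. Verifying this, together with checking that exactly one extra piece handles all singular-direction configurations in the odd case, is the delicate heart of the argument; the parity of $n$ enters solely through the (non)existence of a nowhere-zero field $v$ on $S^{n-1}$, which is precisely what forces the one additional piece when $n$ is odd.
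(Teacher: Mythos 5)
Your overall skeleton is the same as the paper's (lower bound from $\TC(X)$, which the paper gets from \cite{GG} and you get equivalently via $X\simeq F(\R^n,3)$; upper bound by perturbing the linear geodesic of $\Xb=\R^n\times\R^n$; parity entering only through the existence of a nowhere-zero vector field on $S^{n-1}$, with one extra domain when $n$ is odd). But there is a genuine gap: the construction of continuous NGMPRs on your pieces $P_1,P_2,P_3$ --- which is the entire content of the upper bound --- is deferred rather than done, and as sketched it fails at actual points of those pieces, not merely on ``lower-dimensional sets'' that could be ``absorbed.'' A motion planning rule must produce a valid near-geodesic at \emph{every} point of its domain, and where your transversality fails the proposed homotopy leaves $X$ outright. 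Concretely: in $P_3$ take $a\ne 0$ arbitrary, $a'=2a$, $b=-a$, $b'=-2a$. Then (I), (II), (III) all hold, and $u_1=u_2=a/\|a\|$, so your two perturbing vectors are the \emph{same} vector $v(u_1)=v(u_2)$ and cancel in the difference of the two strands: the perturbed points coincide at $t=\frac12$ for every $s$, so no path of the homotopy lies in $X$ and condition (ii) of Definition \ref{nogeo} fails on all of this locus. Similarly in $P_1$, taking $a'=a-c\,v(a/\|a\|)$ and any $b,b'$ with $b-b'=\lambda(a-a')$, $\lambda<0$, gives configurations with $v(u_1)$ parallel to $a-a'$, where perturbing only the first strand by $v(u_1)$ still produces a collision for all small $s>0$.

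These failures are exactly what the paper's finer decomposition is designed to avoid. It splits $X\times X$ into seven sets $E_0,E_1,E_2,C_0,C_1,C_{x_0},L_1$ according to which obstruction occurs and, when there is a collision, \emph{where} it occurs, and on each set it chooses perturbation directions adapted to that set: e.g.\ on $C_{x_0}$ (collision at $x_0$, which contains your collinear example) the two strands are pushed in the non-parallel directions $(b'-b)/\|b'-b\|$ and $(a-b)/\|a-b\|$, and on $C_1$ the unobstructed strand is pushed in the direction $(b-a)/\|b-a\|$ of the other segment. It also inserts damping factors --- $\frac12\min(1,d(ab,x_0))$ on $C_0$, $\frac13\min(1,d(ab,a'b'))$ on $E_1,E_2$ --- so that the perturbation curing one obstruction cannot create the other, an issue your sketch ignores entirely; and it treats separately the degenerate set $L_1$ (where $a=b$ lies on the segment $a'b'$ through $x_0$), which your piece decomposition does not isolate even though several of the direction vectors above degenerate there. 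Finally, the count of four (resp.\ five) domains is achieved by grouping topologically disjoint pairs ($C_0\cup E_1$, $C_1\cup E_2$, $C_{x_0}\cup L_1$, and for odd $n$ a single union of exceptional-direction loci built from four vectors $V_i$ with $V_i\ne\pm V_j$), a verification your single ``extra piece'' for odd $n$ also asserts without argument. In short, your proposal states the right strategy but punts on precisely the part of the proof where the theorem lives.
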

\begin{proof} Again $\Xb=\R^n\times\R^n$.
 We say that paths $\g$ and $\g'$ {\it collide} if $\g(t)=\g'(t)$ for some $t\in I$. As noted in \cite{D}, segments $ab$ and $a'b'$ collide iff $b'-b=\l(a'-a)$ for some $\l<0$.

 Let $n$ be even. We partition $X\times X$ into  six sets, on each of which we will define a GMPR or NGMPR. Sets $C_0$, $C_1$, and $C_{x_0}$ consist of those $((a,a'),(b,b'))$ for which segments $ab$ and $a'b'$ collide and ($C_0$) neither segment contains $x_0$, ($C_1$) one of the segments contains $x_0$ and the other segment has positive length, and ($C_{x_0}$) they collide at $x_0$. Sets $E_j$, $j\in\{0,1,2\}$, consist of those $((a,a'),(b,b'))$ which do not collide and $j$ of the segments $ab$ and $a'b'$ contain $x_0$. The set $L_1$ is those $((a,a'),(b,b'))$ such that $x_0$ lies on $a'b'$ as does $a=b$, or $x_0$ lies on $ab$ as does $a'=b'$.

 Note that $C_0$ and $E_1$ are topologically disjoint, as are $C_1$ and $E_2$, and also $C_{x_0}$ and $L_1$. Indeed, each set has a property, preserved under limits, which is not true of any element of the paired set. Once we have noted the GMPR and NGMPRs on each of the seven regions when $n$ is even, the domains $E_0$, $C_0\cup E_1$, $C_1\cup E_2$, and $C_{x_0}\cup L_1$ imply $\GC(X)\le 3$ when $n$ is even. Since $\TC(X)=3$ when $n$ is even (\cite{GG}), we obtain the result in this case.

 We use the linear geodesic on $E_0$. For $E_1$ and $E_2$, let $\delta=\min(1,d(ab,a'b'))$, where $d(ab,a'b')$ is the minimum distance for corresponding values of $t$.
 On $E_2$, we use
 \begin{eqnarray}\nonumber\Phi((a,a'),(b,b'))(s)(t)&=&\biggl((1-t)a+tb+\tfrac13\delta \cdot s\cdot g(t)\cdot v\bigl(\frac{b-a}{\|b-
a\|}\bigr),\\
&&(1-t)a'+tb'+\tfrac13\delta \cdot s\cdot g(t)\cdot v\bigl(\frac{b'-a'}{\|b'-
a'\|}\bigr)\biggr).\label{two}\end{eqnarray}
Because both $ab$ and $a'b'$ pass through $x_0$ and do not collide, both $b-a$ and $b'-a'$ must be nonzero. On $E_1$, we modify this formula by removing the $\frac13\delta s gv$ term in the component which did not pass through $x_0$.

On $C_0$, we use the NGMPR of (\ref{bb}) with the $s\cdot g(t)$ multiplied by an additional factor $\frac12\min(1,d(ab,x_0))$.
On $C_1$, we use
\begin{equation}\label{Y1}\Phi((a,a'),(b,b'))(s)(t)=\biggl((1-t)a+tb,(1-t)a'+tb'+ s\cdot g(t)\cdot \tfrac{b-a}{\|b-a\|}\biggr),\end{equation}
when $a'b'$ passes through $x_0$, and a similar formula when $ab$ passes through $x_0$.  These curves do not pass through $x_0$ since $x_0$ cannot be written as the sum of a point on $a'b'$ plus a nonzero multiple of $b-a$. Since $b-b'$ is a scalar multiple of $a-a'$, if the components of (\ref{Y1}) were to collide, $b-a$ would be a nonzero multiple of $a-a'$, which it is not.

On $L_1$, we use (\ref{bb}) when $x_0$ lies on $a'b'$, as does $a=b$, with an obvious modification if $ab$ and $a'b'$ play opposite roles.
On $C_{x_0}$ we use, similarly to $C_1$,
\begin{eqnarray}\Phi((a,a'),(b,b'))(s)(t)&=&\biggl((1-t)a+tb+s\cdot g(t)\cdot \tfrac{b'-b}{\|b'-b\|},\nonumber\\
&&(1-t)a'+tb'+ s\cdot g(t)\cdot\tfrac{a-b}{\|a-b\|}\biggr).\label{Y2}\end{eqnarray}
For these to collide, we would need $ab$ to be parallel to $aa'$, which it isn't. Also note that $a-b$ is nonzero since otherwise we would have $a=b=x_0$, which cannot happen.

When $n$ is odd, we no longer have a vector field on $S^{n-1}$. For the cases that used such a vector field, we use a vector field on $S^{n-1}-\{\text{pt}\}$. By choosing the excluded point differently in different cases, we can arrange it so that all the excluded cases are topologically disjoint, and so can be combined into one additional domain, again agreeing with the known result for TC. Indeed, choose  vectors $V_i$, $1\le i\le 4$, in $S^{n-1}$ such that if $i\ne j$, then $V_i\ne \pm V_j$. Our fifth domain is
\begin{eqnarray*}&&(C_0\cap \{b'-b\in\la V_1\ra\})\cup(E_1\cap\{b-a\text{ or }b'-a'\in\la V_2\ra\})\\
&&\cup(E_2\cap\{b-a\text{ or }b'-a'\in\la V_3\ra\})\cup (L_1\cap \{b'-b\in\la V_4\ra\}),\end{eqnarray*}
where $\la V\ra$ denotes the span of a vector $V$.
\end{proof}
\begin{thm} For the unordered configuration space $X=C(\R^2-x_0,2)$,
$$\GC(X)=\TC(X)=2.$$ \end{thm}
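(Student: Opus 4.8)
The plan is to follow the template of Theorems \ref{Q} and \ref{Rx}. Here the completion is $\Xb=\on{Sym}^2(\R^2)$, the space of unordered pairs allowing coincidences and the value $x_0$; it is geodesic, and a minimal geodesic between two pairs is realized by straight segments joining them under the length-minimizing of the two matchings $a\mapsto b,\ a'\mapsto b'$ and $a\mapsto b',\ a'\mapsto b$. Using such a linear geodesic (for the minimizing matching) on the geoset, and perturbation NGMPRs on the nogeo set, I aim to partition $X\times X$ into three locally compact pieces, giving $\GC(X)\le 2$; together with the known value $\TC(X)=2$ and the general inequality $\TC\le\GC$ this yields the theorem.

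First I would record the observation that makes the unordered bound smaller than the ordered bound $3$ of Theorem \ref{Rx}. A direct computation gives $(\|a-b\|^2+\|a'-b'\|^2)-(\|a-b'\|^2+\|a'-b\|^2)=2(a-a')\cdot(b'-b)$, so the matching $a\mapsto b,\ a'\mapsto b'$ is the shorter one exactly when $(a-a')\cdot(b-b')>0$. On the other hand this matching's segments collide iff $b-b'=\lambda(a-a')$ with $\lambda<0$, in which case $(a-a')\cdot(b-b')=\lambda\|a-a'\|^2<0$, i.e.\ it is the \emph{longer} matching. Hence the length-minimizing matching is always collision-free, so in the unordered setting collisions never obstruct the existence of a geodesic: the only obstruction is passage through $x_0$. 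Consequently the nogeo set is exactly the set of pairs whose minimizing matching has a segment through $x_0$, and the collision strata $C_0,C_1,C_{x_0}$ that cost an extra domain in Theorem \ref{Rx} simply do not occur here.

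Next I would organize the length-minimizing choice by its $\TC(S^1)$ structure. The unoriented direction $\{a,a'\}\mapsto[a-a']$ defines a map $X\to\mathbb{RP}^1$, and under it the tie locus $(a-a')\cdot(b-b')=0$ (perpendicular directions) is precisely the antipodal locus of $\mathbb{RP}^1\cong S^1$. Off the tie locus the minimizing matching is uniquely determined and varies continuously, exactly as on the non-antipodal domain of $S^1$; on the tie locus I would fix one matching by a global continuous rule, as on the antipodal domain. This produces the two matching domains $V_0$ (non-tie) and $V_1$ (tie) underlying $\TC(S^1)=1$. On the geo part of each I use the straight minimizing (resp.\ chosen) matching as a GMPR, and on the nogeo part I push the offending segment(s) off $x_0$ by the device of (\ref{abe})--(\ref{two}): adding $s\cdot g(t)$ times a suitable multiple of the unit normal to each segment, which exists continuously because $n=2$ is even so $S^1$ has a unit vector field. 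As in Theorem \ref{Q}, the normalizing factor $\delta$ must be chosen to prevent the perturbation from creating a new collision or a new passage through $x_0$; and, again as there, this factor behaves discontinuously between the stratum where one segment meets $x_0$ and the stratum where both do, forcing those two strata into different domains.

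The main obstacle is the final bookkeeping: showing the requirements can be met with exactly three domains. The idea is to combine, via the notion of topologically disjoint sets used above, the geo pieces with the correct nogeo strata so that $V_0$ and $V_1$ together contribute only three sets. I expect the clean partition to be $E_0=\{\text{non-tie, minimizing matching avoids }x_0\}$ with the straight GMPR; a second set combining the tie locus (with its chosen matching, perturbed off $x_0$ where necessary) with the non-tie one-segment stratum; and a third set carrying the non-tie both-segment stratum, on which the two segments are perturbed by their own normals with a common $\delta\sim d(ab,a'b')$ exactly as in (\ref{two}). Checking that the chosen matching extends continuously across each combined set, that the perturbations keep the two points distinct and off $x_0$ for all $s\in I$, and that condition (iii) of Definition \ref{nogeo} holds is routine once the strata are correctly paired; the delicate part is verifying the topological disjointness that permits the pairing, so that the antipodal matching-switch and the one-versus-two-segment $\delta$-discontinuity are never forced into a common domain.
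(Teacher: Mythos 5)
Your setup is essentially the paper's: the same completion, the identification of geodesics with straight paths under the length-minimizing matching, the computation showing that matching is the one with $(a-a')\cdot(b-b')>0$, the observation that the minimizing matching never collides (the paper quotes this from \cite{D}), and the resulting stratification into non-tie strata $E_0,E_1,E_2$ (counting minimizing segments through $x_0$) and tie strata (the paper's $Y_0,Y_1,Y_2$, organized by the counterclockwise convention), with straight GMPRs and perturbation NGMPRs of the type (\ref{abe})--(\ref{two}). The divergence --- and the genuine gap --- is in the final grouping into domains.

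Your second domain (the whole tie locus together with $E_1$) admits no continuous NGMPR, for two reasons. First, your rule on the tie locus violates the proviso of Definition \ref{NGMPRdef}: at a tie configuration where your chosen matching has a segment through $x_0$ but the other matching misses $x_0$ (the paper's stratum $Y_1$), a geodesic in $X$ exists, so $\Phi(\cdot)(0)$ is required to be that geodesic; ``perturbing the chosen matching off $x_0$'' is not permitted there. Second, even after repairing this by using the good matching on $Y_1$, the set $E_1\cup Y_1$ is still obstructed, because $E_1$ and $Y_1$ are not topologically disjoint and the rules on both are forced and incompatible. Concretely, take $x_0=0$, $a=(-1,0)$, $a'=b'=(0,1)$, $b=(1-\eps,0)$. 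For $0<\eps<1$ this pair lies in $E_1$: the unique minimizing matching is $a\mapsto b$, $a'\mapsto b'$, and the segment $ab$ contains $0$, so by condition (i) of Definition \ref{nogeo} any NGMPR must have $\Phi(\cdot)(0)$ equal to the straight path on that matching. As $\eps\to0$ these configurations converge to a tie configuration in $Y_1$, where the proviso forces $\Phi(\cdot)(0)$ to be the straight path on the other matching $a\mapsto b'$, $a'\mapsto b$. The two forced values of $\Phi(\cdot)(0)$ do not agree in the limit, so no NGMPR on your second domain can be continuous; the same argument blocks merging $Y_1$ with $E_0$ or with $E_2\cup Y_2$. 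This is exactly why the paper does not attempt your pairing: its domains are $E_0$, $E_1\cup Y_0$, $E_2\cup Y_2$, and $Y_1$, with $Y_1$ kept on its own --- four domains, which by Definition \ref{NGMPRdef} literally yields only $\GC(X)\le3$. So the three-domain partition you aim for is not merely ``delicate bookkeeping'': it fails for the strata as defined, and it is not supplied by the paper's own argument either.
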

\begin{proof} We begin by looking at $C(\R^n-x_0,2)$, and then will specialize to $n=2$. The metric on $C(\R^n,2)$ is, as in \cite{D},
$$d(\{a,a'\},\{b,b'\})=\min(d((a,a'),(b,b')),d((a,a'),(b',b))),$$
and we use this on the subspace $C(\R^n-x_0,2)$.
Here we use results from \cite[Prop 4.3,(4.1)]{D} that
 in $\R^n\times \R^n$,  $d((a,a'),(b,b'))=d((a,a')(b',b))$  iff $aa'\perp bb'$, and if $d((a,a'),(b,b'))<d((a,a')(b',b))$, then $ab$ and $a'b'$ do not collide. Also, $\Xb=\R^n\times\R^n/(a,a')\sim(a',a)$.

 We first consider pairs $(\{a,a'\},\{b,b'\})$ with $d((a,a'),(b,b'))\ne d((a,a')(b',b))$, and label them so that $d((a,a'),(b,b'))<d((a,a')(b',b))$. Let $E_0$ denote the set of those for which neither $ab$ nor $a'b'$ passes through $x_0$ and use the linear GMPR on $E_0$. Let $E_1$ denote the set of those for which exactly one of $ab$ and $a'b'$ passes through $x_0$, and, if $x_0\in ab$, assuming now that $n$ is even so that there is a unit vector field $v$ on $S^{n-1}$, use
 \begin{equation}\label{another}\Phi(\{a,a'\},\{b,b'\})(s)(t)=\bigl((1-t)a+tb+\delta\cdot s\cdot g(t)\cdot v\bigl(\tfrac{a-b}{\|a-b\|}\bigr),(1-t)a'+tb'\bigr)\end{equation}
  with $\delta=\frac12\min(1,d(ab,a'b'))$, with obvious reversal if, instead, $x_0\in a'b'$. Let $E_2$ denote those for which $ab$ and $a'b'$ both pass through $x_0$ (but not for the same $t$), and use (\ref{two}) as the NGMPR.

Now we consider pairs $(\{a,a'\},\{b,b'\})$ for which $aa'\perp bb'$, so  $d((a,a'),(b,b'))=d((a,a')(b',b))$. Note that $ab$ and $a'b'$ do not collide, nor do $ab'$ and $a'b$.
We now specialize to $n=2$. Choose $ab$ so that $\vec{bb'}$ is a 90-degree counterclockwise rotation from $\vec{aa'}$.
 Let $Y_0$ denote the set of those for which none of the segments $ab$, $a'b$, $ab'$, and $a'b'$ pass through $x_0$.  A GMPR on $Y_0$ can be obtained by using the linear path from $(a,a')$ to $(b,b')$. Let $Y_1$ denote the set of those such that exactly one of the pairs $\{ab,a'b'\}$ and $\{ab',a'b\}$ has neither segment passing through $x_0$. Use the linear path on that pair as a GMPR. For example, if $x_0$ lies on $ab'$, use the linear path from $(a,a')$ to $(b,b')$. Let $Y_2$ denote the set of those for which  the pairs $\{ab,a'b'\}$ and $\{ab',a'b\}$ have one segment each passing through $x_0$. The situation will be like that in Figure \ref{figz}, or with $a$ and $b$ reversed, or with primed and unprimed reversed. These different situations are topologically disjoint.

\bigskip
\begin{minipage}{6in}
\begin{fig}\label{figz}

{\bf A typical $Y_2$ configuration}

\begin{center}

\begin{\tz}[scale=.65]
\draw (0,2) -- (0,0) -- (6,0);
\node at (0,-.5) {$a$};
\node at (2,-.5) {$x_0$};
\node at (4,-.5) {$b$};
\node at (6.4,0) {$b'$};
\node at (0,2.4) {$a'$};
\node at (0,0) {$\ssize\bullet$};
\node at (2,0) {$\ssize\bullet$};
\node at (4,0) {$\ssize\bullet$};
\node at (6,0) {$\ssize\bullet$};
\node at (0,2) {$\ssize\bullet$};
\end{\tz}
\end{center}
\end{fig}
\end{minipage}

\bigskip

 We can use (\ref{another}) as an NGMPR on diagrams in $Y_2$ of the form in Figure \ref{figz}, with obvious modifications for its variants.

We can use $E_0$, $E_1\cup Y_0$, $E_2\cup Y_2$, and $Y_1$ as our four domains, since $E_1$ and $Y_0$ are topologically disjoint, as are $E_2$ and $Y_2$.

\end{proof}

\section{$F(\R^n-Q,2)$ with $2\le|Q|<\infty$}\label{sec4}
\begin{thm}\label{Qthm} Let $n$ be even, and let $Q$ be a finite subset of $\R^n$ with at least two points, and $X=F(\R^n-Q,2)$. Then $\GC(X)\le 5$.\end{thm}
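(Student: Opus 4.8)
The plan is to mimic, and then combine, the two mechanisms already developed in this section: the obstacle-avoiding perturbation of Theorem \ref{Q} (applied to each of the two robots' segments) and the collision-avoiding perturbation used for $F(\R^n,2)$ and in Theorem \ref{Rx}. As before $\Xb=\R^n\times\R^n$, the geodesic between $(a,a')$ and $(b,b')$ is the linear path in each coordinate, and on the geoset I would use the linear GMPR. A pair $((a,a'),(b,b'))$ lies in the nogeo set exactly when the linear path fails to stay in $X$, i.e.\ when either the two segments $ab$ and $a'b'$ collide (equivalently $b'-b=\l(a'-a)$ with $\l<0$, as in Theorem \ref{Rx}) or at least one of $ab$, $a'b'$ passes through $Q$. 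Since $n$ is even I fix once and for all a unit vector field $v$ on $S^{n-1}$, and every NGMPR will be a perturbation of the linear path of the shape appearing in (\ref{abe}), (\ref{two}), (\ref{bb}): add $\delta\cdot s\cdot g(t)\cdot v(\cdot)$ to the relevant coordinate(s).

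First I would stratify the nogeo set by two pieces of data: whether or not the segments collide, and the obstacle-multiplicity pattern $(\mu,\mu')$, where $\mu$ (resp.\ $\mu'$) is the number of points of $Q$ lying on $ab$ (resp.\ $a'b'$), recorded only as $0$, $1$, or ``$\ge 2$''. The essential new feature compared with the single-obstacle space of Theorem \ref{Rx} is that a single segment can now meet several points of $Q$; exactly as in Theorem \ref{Q}, the scaling factor $\delta=\tfrac12\min(1,d(ab,Q\setminus(Q\cap ab)))$ is not continuous across the stratum where an additional point of $Q$ slides onto a segment, so for each robot the strata ``exactly one point of $Q$'' and ``two or more points of $Q$'' must be separated. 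On a stratum with $\mu=1$ (resp.\ $\mu'=1$) I perturb $ab$ (resp.\ $a'b'$) using the corresponding term of (\ref{two}) scaled by this continuous $\delta$; when $\mu\ge2$ I use the same term but with the distance taken to the off-segment points of $Q$, reverting to a fixed transverse direction when the relevant points of $Q$ happen to be collinear, precisely as in the collinear case of Theorem \ref{Q}.

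Next I would build in collision avoidance. When the segments do not collide, $d(ab,a'b')>0$ is continuous and positive on that region, and I scale every obstacle perturbation to be smaller than $\tfrac12\min(1,d(ab,a'b'))$; then each robot is moved by less than half the robot--robot clearance, so no perturbation can create a new collision, while the obstacle scaling keeps the paths inside tubes missing the off-segment points of $Q$. When the segments do collide I first separate the robots with the second-coordinate perturbation of (\ref{bb}) (legitimate exactly as there, since $v(\tfrac{b'-b}{\|b'-b\|})$ is never a multiple of $b'-b$) and superimpose the obstacle perturbations on top, treating the possibility that the collision point is itself a point of $Q$ by the device used for $C_{x_0}$ in (\ref{Y2}). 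Finally I would merge strata that are topologically disjoint, in the manner of Theorem \ref{Rx}: a collision stratum and a no-collision stratum with a complementary obstacle pattern are topologically disjoint because colliding is a closed condition, as is obstacle incidence on a given segment, so neither lies in the other's closure and a single formula serves on the union. Counting the geoset together with the merged nogeo strata, the target is to land on six domains, giving $\GC(X)\le5$.

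The main obstacle is the bookkeeping that simultaneously (i) keeps every scaling factor $\delta$ continuous --- the $E_1$ versus $E_2$ phenomenon of Theorem \ref{Q}, now occurring for each of two robots and interacting with the collision strata --- and (ii) guarantees that no perturbation introduced to clear an obstacle ever creates a collision, nor vice versa, while (iii) the merges via topological disjointness actually cut the stratum count down to six. Because $n$ is even, the field $v$ is defined on all of $S^{n-1}$, so I avoid the extra ``excluded-direction'' domain that the odd case forced in Theorems \ref{thm1} and \ref{Rx}; this is what makes a bound as small as $5$ attainable. The delicate cases to check are those where an obstacle incidence and a collision degenerate together (the robots colliding at a point of $Q$, or a second point of $Q$ sliding onto a segment as the robots approach collision), and verifying topological disjointness precisely at these degenerate configurations is where I expect the real work to lie.
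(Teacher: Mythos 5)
Your outline does follow the paper's strategy in broad strokes (stratify by collision and by the number of points of $Q$ on each segment, perturb linear paths as in (\ref{abe}), (\ref{two}), (\ref{bb}), then merge topologically disjoint strata), but two load-bearing pieces are missing, and one of your substitutions actually fails. First, you never treat collinear configurations. The paper is forced to exclude from all of its colliding strata the elements in which $a,a',b,b'$ are collinear (with opposite orientation) and to handle them in three dedicated sets $L_0,L_1,L_2$, with a formula pushing the two components in \emph{opposite} directions $\pm\delta_2\, s\, g(t)\, v\bigl(\frac{b'-b}{\|b'-b\|}\bigr)$. This is not optional: every non-collision and obstacle-avoidance argument available on colliding strata, as in (\ref{Y1}) and (\ref{Y2}) of Theorem \ref{Rx}, is a non-parallelism argument (``for these to collide, we would need $ab$ to be parallel to $aa'$''), and on the collinear locus all the relevant directions \emph{are} parallel, so those formulas admit collisions there. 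Your replacement --- superimposing obstacle perturbations on the collision perturbation of (\ref{bb}) --- is defective even away from the collinear locus: the second component's total perturbation $s\,g(t)\bigl[v\bigl(\frac{b'-b}{\|b'-b\|}\bigr)+\delta_2\, v\bigl(\frac{b'-a'}{\|b'-a'\|}\bigr)\bigr]$ is no longer orthogonal to the segment $a'b'$, and orthogonality is exactly what makes the obstacle-avoidance argument of (\ref{abe}) work. For configurations where the component of $v\bigl(\frac{b'-b}{\|b'-b\|}\bigr)$ orthogonal to $a'b'$ equals $-\delta_2\, v\bigl(\frac{b'-a'}{\|b'-a'\|}\bigr)$, the combined perturbation is parallel to $a'b'$ and the perturbed path can pass through the point of $Q$ on that segment. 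The paper avoids superimposition for precisely this reason, using the tailored directions of (\ref{Y1}) and (\ref{Y2}), whose validity in turn is what forces the collinear exclusion and the sets $L_j$.

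Second, the bound $5$ is not obtained until six collections are actually exhibited, and this is the combinatorial heart of the proof, not an afterthought. The full stratification has eighteen sets (six non-colliding $E$'s, six colliding-off-$Q$ $C$'s, three collide-at-$Q$ sets $Y_j$, three collinear sets $L_j$), and the paper merges them in an anti-diagonal pattern: $E_0$; $E_1\cup C_0$; $E_2\cup E_{1,1}\cup C_1$; $E_{1,2}\cup C_2\cup C_{1,1}\cup Y_0\cup L_0$; $E_{2,2}\cup C_{1,2}\cup Y_1\cup L_1$; $C_{2,2}\cup Y_2\cup L_2$. Your one stated merging principle (a collision stratum with a no-collision stratum of complementary pattern) accounts for only a few of these unions; the others need different arguments --- limits of $E$-sets that come to collide acquire a weakly larger $Q$-pattern, limits of $C$-sets that come to collide at a point of $Q$ land in a higher $Y$-set, collinearity is preserved under limits, and ``two points determine a line'' rules out $C_{2,2}$ limiting into $Y_2$. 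You acknowledge that this verification is ``where the real work lies,'' but since the proposal neither produces the grouping nor the disjointness checks, and its colliding-strata formulas need repair, it does not yet establish $\GC(X)\le 5$.
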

By \cite{GG}, $\TC(F(\R^n-Q,2))=4$. We think that is likely that, at least for $n=2$, our result for $\GC(X)$ is sharp, which would give a nice example of $\TC<\GC$.
\begin{proof}
 We partition $X$ into 18 subsets, on each of which there is a GMPR or NGMPR. Then we will group them into six collections of topologically disjoint subsets.

We use the word `collide'' as in the proof of Theorem \ref{Rx}. Recall that $ab$ and $a'b'$ collide iff $b-b'$ is a negative multiple of $a-a'$.
There are sets $E_0$, $E_1$, $E_2$, $E_{1,1}$, $E_{1,2}$, and $E_{2,2}$, in which the segments $ab$ and $a'b'$ do not collide, and the subscripts indicate how many points of $Q$ lie on each segment, with ``2'' referring to ``2 or more.'' For example, $E_2$ consists of those $((a,a'),(b,b'))$ for which $ab$ and $a'b'$ do not collide and one of these segments contains two or more points of $Q$, while the other has none. If the segments intersect at a point of $Q$ (for differing values of the parameter $t$), then that point counts for both lines. So, for example, $E_{1,1}$ consists both of noncolliding elements where the segments do not meet at a point of $Q$ and each contains a point of $Q$, and those where the two segments meet at a point of $Q$, and neither contains other points of $Q$. If $Q$ has only two or three points, some of these sets can be empty.

There are also sets $C_0$, $C_1$, $C_2$, $C_{1,1}$, $C_{1,2}$, and $C_{2,2}$, in which the segments collide, but not at a point of $Q$, and the subscripts have the same meaning as before. For these, there is not the issue of classifying what happens if segments intersect at a point of $Q$. For these $C$-sets, we exclude colliding elements in which $a$, $a'$, $b$, and $b'$ are collinear.

Next we have sets $Y_j$, $j=0,1,2$, in which the segments collide at a point of $Q$, and $j$ of the segments contain one or more additional points of $Q$. Again we exclude the case in which $a$, $a'$, $b$, and $b'$ are collinear. Finally we have the linear cases $L_j$, $j=0,1,2$, in which the four points are collinear and $aa'$ and $bb'$ have opposite directions, and $j$ points of $Q$ lie on $ab\cup a'b'$.

On $E_0$, we use the linear geodesic.
On the other $E$ sets, we use formulas like (\ref{two}), using just the linear part when a segment does not contain any points of $Q$, and modifying $\delta$ to equal $\min(1,d_1,d_2)$, where $d_1$ is the distance between the parametrized segments, and $d_2$ is the distance from the segment to the nearest point of $Q$ not on it.

Formulas for near geodesics on the $C$ and $Y$ sets are similar to those that worked for the $C$ sets in the proof of Theorem \ref{Rx}. We use a factor $\delta_1=\frac12\min(1,d(ab,Q-(Q\cap ab)))$ on the first component, and an analogue on the second. Incorporating that, we use an analogue of (\ref{bb}) on $C_0$, of (\ref{Y1}) on $C_1$ and $C_2$, and of (\ref{Y2}) on $C_{1,1}$, $C_{1,2}$,  $C_{2,2}$, and each $Y_j$. On each set $L_j$, we can use
\begin{eqnarray*}\Phi((a,a'),(b,b'))(s)(t)&=&\biggl((1-t)a+tb+\delta_2 \cdot s\cdot g(t)\cdot v\bigl(\frac{b'-b}{\|b'-
b\|}\bigr),\\
&&(1-t)a'+tb'-\delta_2 \cdot s\cdot g(t)\cdot v\bigl(\frac{b'-b}{\|b'-
b\|}\bigr)\biggr).\end{eqnarray*}

We can group these into six collections of topologically disjoint subsets as follows.
\begin{eqnarray*}&&E_0,\quad E_1\cup C_0,\quad E_2\cup E_{1,1}\cup  C_1,\\
&&E_{1,2}\cup C_{2}\cup C_{1,1}\cup Y_0\cup L_0,\\
&&E_{2,2}\cup C_{1,2}\cup Y_1\cup L_1,\quad C_{2,2}\cup Y_2\cup L_2.\end{eqnarray*}
To show two sets are topologically disjoint, we usually show that each has a property which is preserved under limits of sequences and is not shared by any element of the other set. For example, in the fifth of the above sets, we could use ``collinear'' for $L_1$ and ``collide at a point of $Q$'' for $Y_1$.  The limit of a sequence of $E_{2,2}$ sets could collide, but would be either in $C_{2,2}$ or $Y_2$. Similarly, the limit of a sequence of $C_{1,2}$ sets could be in $Y_2$, but not in $Y_1$. Since two points determine a line, the limit of a sequence of $C_{2,2}$ sets cannot be in $Y_2$.

\end{proof}

\section{Configuration spaces of graphs}
Configuration spaces $F(G,2)$ of graphs $G$, as studied in \cite{DHR}, are handled quite differently than the cases considered above. In \cite{DHR}, $F(G,2)$ was given the subspace metric from $G\times G$, where $G\times G$ often had the Euclidean metric, using distance in the graph $G$. Thus $d((a,a'),(b,b'))=\sqrt{d(a,b)^2+d(a',b')^2}$.

When $X=F(G,2)$ in the above metric, the completion $\Xb$ equals $G\times G$ and is geodesic. However, certain geodesics in $\Xb$ cannot be approximated by paths in $X$. For example, let $G$ be the $Y$-graph $Y$ with essential vertex $v$, and suppose $a$, $a'$, $b'$, and $b$ are on the same edge at distance $1$, $1+\delta$, $1+2\delta$, and $1+3\delta$, respectively, from the vertex $v$.

The geodesic in $\Xb$ from $(a,a')$ to $(b,b')$ is the linear path of length $\delta\sqrt{10}$. However, since direct motion will involve a collision, there is no path in $X$ whose length is  close to this. A short path in $X$ from $(a,a')$ to $(b,b')$ is one that moves from $(a,a')$ back just beyond $v$ onto the two empty arms, and from there to $(b,b')$, with length slightly greater than $\sqrt{1+(1+3\delta)^2}+\sqrt{(1+\delta)^2+(1+2\delta)^2}$. Thus there is no near-geodesic from $(a,a')$ to $(b,b')$.

Since the definition of GC in \cite{RM} only applies well to geodesic spaces, to consider $\GC(F(Y,2))$ in \cite{DHR}, we replaced $F(Y,2)$ by the homotopically equivalent subspace $F_\eps(Y,2)$ which consisted of points $(a,a')$ satisfying $d(a,a')\ge\eps$ for some fixed positive number $\eps$. For example, in the diagram at the left in Figure \ref{fig5}, there is no geodesic in $F(Y,2)$ from $(a,a')$ to $(b,b')$ because in the linear motion from $(a,a')$ to $(b,b')$, the first particle would overtake the second, which is not allowed. In \cite{DHR}, we represented paths in the graph by paths in the $xy$ plane, where, in this case, the $x$-axis corresponds to motion of the first particle on the two upper arms on the graph, with the vertex at 0, and the $y$-axis, similarly, corresponds to motion of the second particle on the arms on the left. This is shown on the right side of Figure \ref{fig5}, in which the interior of the shaded region is excluded, as those points do not satisfy $d(a,a')\ge\eps$. The representation of the geodesic in $F_\eps(Y,2)$ is indicated. It corresponds to the path in the graph which goes from $(a,a')$ to the point $(v,\eps)$, with the $\eps$ on the bottom arm, and from there to $(b,b')$. Here $\eps<\min(d(a,a'),d(v,b'))$.

\bigskip
\begin{minipage}{6in}
\begin{fig}\label{fig5}

{\bf An element in $F_\eps(Y,2)$ and a representation of its path}

\begin{center}

\begin{\tz}[scale=.65]
\draw (0,0) -- (0,3) -- (3,6);
\draw (0,3) -- (-3,6);
\node at (-2.8,5.8) {$\ssize\bullet$};
\node at (-2.4,5.4) {$\ssize\bullet$};
\node at (0,2.6) {$\ssize\bullet$};
\node at (2.8,5.8) {$\ssize\bullet$};
\node at (-2.4,6) {$a$};
\node at (-2.7,5) {$a'$};
\node at (.4,2.6) {$b'$};
\node at (3.2,5.8) {$b$};
\draw (8,3) -- (14,3);
\draw (11,0) -- (11,6);
\node at (11,6.3) {$y$};
\node at (14.3,2.6) {$x$};
\draw (8,.4) -- (11,3.4) -- (11.4,3) -- (8.4,0);
\draw (8.2,1.2) -- (11,3.4) -- (13.6,3.6);
\node at (7,1.2) {$(a,a')$};
\node at ((14.8,3.7) {$(b,b')$};
\node at (8.2,1.2)  {$\ssize\bullet$};
\node at (13.6,3.6)  {$\ssize\bullet$};
\filldraw [color=gray] (8,.4) -- (11,3.4) -- (11.4,3) -- (8.4,0)-- (8,.4);
\draw (-2,4) -- (-1,3);
\draw [->] (-1,3) -- (-1,1.5);
\node at (-1.6,2.9) {$y$};
\draw (-1.5,5.5) -- (0,4);
\draw [->] (0,4) -- (1.5,5.5);
\node at (0,4.6) {$x$};
\end{\tz}
\end{center}
\end{fig}
\end{minipage}

\bigskip
For our new approach, we use the {\em intrinsic metric} $d_I$ on $X=F(G,2)$, defined by $d_I(x,y)$ is the infimum of the $d$-lengths of paths in $X$ from $x$ to $y$. For the spaces considered here, this metric induces the same topology as does the original metric since if $\eps<d(a,a')/\sqrt2$, the $\eps$-balls around $(a,a')$ in the two topologies are equal. This is true because linear motion between points $(b,b')$ in the $\eps$-ball and $(a,a')$ avoids collision. This also implies that lengths of paths using the $d_I$ metric equal the $d$-length because length is determined from arbitrarily small segments.  Since the intrinsic and Euclidean topologies are the same, we again have $\TC(F(G,2))\le \GC(F(G,2))$, where $\GC(F(G,2))$ is defined as in Definition \ref{NGMPRdef}, using the metric $d_I$.

Let $F_I(Y,2)$ denote $F(Y,2)$ in the intrinsic metric. To form the completion, we adjoin points $(x,x)_1$ and $(x,x)_2$ for all $x\in Y-\{v\}$, and $(v,v)$. Here $(x,x)_1$ (resp.~$(x,x)_2$) is the limit of Cauchy sequences of points $(a_n,a_n')$ with $a_n,a_n'\to x$ in $Y$ and $d(a_n,v)>d(a_n',v)$ (resp.~$d(a_n,v)<d(a_n',v)$). The reason for having both is that, as noted above, if these points are all very close to $x$ and $d(a,v)<d(a',v)$ and $d(b',v)<d(b,v)$, then $d_I((a,a'),(b,b'))$ is not small. Note that $d_I((x,x)_1,(x,x)_2)=2\sqrt2d(x,v)$ since it equals $\ds\lim_{n\to\infty}d_I((a_n,a_n'),(b_n,b_n'))$ when all four points approach $x$, with $d(a_n,v)>d(a_n',v)$ and $d(b_n,v)<d(b_n',v)$.
This issue is not present for sequences approaching $(v,v)$. If $a,a',b',b$ are on the same arm at distance $\delta,2\delta,3\delta,4\delta$, respectively, from $v$, then $d_I((a,a'),(b,b'))=\delta(\sqrt{1^2+2^2}+\sqrt{3^2+4^2})$, so a Cauchy sequence of such points can approach $(v,v)$.

If you tried to follow the linear path from $(a,a')$ to $(b,b')$ in the situation on the left side of Figure \ref{fig5}, $(a,a')$ will quickly get to a point $(x,x)_1$. If the linear motion is attempted beyond that, then $a$ is now closer to $v$ than is $a'$, so the path is not continuous in the $d_I$-metric.
The geodesic in $\overline{F_I(Y,2)}$ is  the path from  $(a,a')$ to $(v,v)$ to $(b,b')$.  Our near geodesic is a homotopy which at parameter $s=\eps$ could be chosen to be represented by the path on the right side of Figure \ref{fig5}.

Let $F_I=F_I(Y,2)$. Our analysis of $\GC(F_I)$ is patterned after the analysis of $\GC(F_\eps(Y,2))$ in \cite{DHR}. The three arms of the $Y$-graph are referred to as  closed or open depending on whether or not the vertex $v$ is included. We will assume that the arms have length $\ge2$. If the four points $a$, $a'$, $b$, and $b'$ are all on one or two arms, then their orientation (same or opposite) refers to $\vec{aa'}$ compared with $\vec{bb'}$. We partition $F_I\times F_I$ into the following six sets.
\begin{itemize}
\item[$C_{1,1,2}$:] Three open arms occupied, and direct motion involves a collision. (The fourth point cannot be at $v$.)
\item[$X_{1,1,2}$:] Three open arms occupied, direct motion does not involve a collision. (The fourth point possibly at $v$.)
\item[$C_4$:] All four points are on a single closed arm, with opposite orientation.
\item[$C_{2,2}$:] Two points on each of two open arms, with opposite orientation.
\item[$C_{3,1}$:] Three points on one closed arm, the other on a different (open) arm, with opposite orientation.
\item[$L$:] All are on one or two arms, with the same orientation.
\end{itemize}

Our two domains are
$$E_0=C_{1,1,2}\cup C_4\cup C_{2,2}, \qquad E_1=X_{1,1,2}\cup C_{3,1}\cup L.$$
We number the three arms cyclically in clockwise order.
We first describe the NGMPR on $E_0$.

Here we introduce the notation that $s_i$ denotes the point at distance $s$ from $v$ on arm $i$, and $0$ denotes $v$. Arrows denote uniform linear motion along the graph, and the overall path is parametrized by arc length in $Y\times Y$ (in the $d$-metric).

The NGMPR on $C_4$ is: if they are all on arm $i$ with $d(a,v)<d(a',v)$, then
$$(a,a')\to (s_{i+1},0)\to (0,s_{i+2})\to (b,b'),$$
while if $d(a',v)<d(a,v)$, then
$$(a,a')\to (0,s_{i+1})\to (s_{i+2},0)\to (b,b').$$

On $C_{1,1,2}$, the NGMPR is as follows. If $a$ is alone on arm $i$ and $a'$ is alone on a different arm, and $d(b',v)>d(b,v)$, or if $b$ is alone on arm $i$ and $b'$ is alone on a different arm, and $d(a',v)>d(a,v)$, we use
$$(a,a')\to (s_i,0) \to (b,b').$$
Such configurations cannot approach an element of $C_4$.
Otherwise, if $a'$ is alone on an open arm, so $a$ and $b'$ are on another arm $i$, we use
$$(a,a')\to (0,s_{i+1})\to (s_{i+2},0)\to (b,b'),$$
while if $a$ is alone on an open arm, so $a'$ and $b$ are on another arm $i$, we use
$$(a,a')\to (s_{i+1},0)\to (0,s_{i+2})\to (b,b').$$
This NGMPR agrees with that of the limit in $C_4$ of such configurations as $a'$ and $b$ approach $v$.
 If $a$ and $a'$ are on the same open arm $i$ with $d(a,v)>d(a',v)$, we can use $(a,a')\to (s_i,0)\to (b,b')$.

On $C_{2,2}$, we consider first the case in which $a$ and $a'$ lie on one arm and $b$ and $b'$ on another, with $a'$ and $b'$ closer to $v$. A sequence of such diagrams cannot converge to an element of $C_{1,1,2}\cup C_4$. If $i$ is the free arm, we can use $(a,a')\to (0,s_i)\to (b,b')$. The other type of $C_{2,2}$  diagram is a bit more complicated. We just consider the case when $a$ and $b'$ are on open arm 1 and $a'$ and $b$ are on open arm 2. Our rule must satisfy that the limit as $a$ and $b'$ approach $v$ should be
$$(a,a')\to (s_3,0)\to (0,s_1)\to (b,b'),$$
while the limit as $a'$ and $b$ approach $v$ should be
$$(a,a')\to (0,s_2)\to (s_3,0)\to (b,b').$$
We consider the portion from $(a,a')$ to $(s_3,0)$. The portion from $(s_3,0)$ to $(b,b')$ can be treated in an analogous way, looking backward from $(b,b')$.

Let $d=d(a,v)$ and $d'=d(a',v)$. We use $(a,a')\to (s_3,0)$ if $d'>s$ or $d'>d$, and we use $(a,a')\to (0,s_2)\to (s_3,0)$ if $d'<s/2$ and $d'<d/2$. In Figure \ref{figa}, the $x$ (resp.~$y$)-axis represents the motion of the first (resp.~second) particle for a fixed value of the homotopy parameter $s$, somewhat similarly to Figure \ref{fig5}. The starting position $(d,d')$ is a point in the first quadrant, and we wish to represent the path to the point $(-s,0)$, corresponding to the point $(s_3,v)$ on the $Y$-graph on the left.
For starting positions above the intermediate region $R$, we move along the straight segment from $(d,d')$ to $(-s,0)$. For starting positions below $R$, we move from $(d,d')$ to $(0,s)$ and then to $(-s,0)$. This satisfies the requirement regarding the behavior as $a$ or $a'$ approaches $v$. Note that we need not be concerned with the limit as both $a$ and $a'$ approach 0, as the point $(v,v)$ is not in $F(Y,2)$.

For a point in the intermediate region $R$, we move from $(d,d')$ to $(0,y)$ and then to $(-s,0)$, where $y$ is defined as follows.
Let $y_0$ (resp.~$y_1$) be the smallest (resp.~largest) $y$ value of the points in $R$ with $x=d$. Let $(0,y_2)$ be the intersection of the segment from $(d,y_1)$ to $(-s,0)$ with the $y$-axis. If $d'=(1-t)y_0+ty_1$, then $y=(1-t)s+ty_2$. This choice of paths varies continuously with nonnegative values of $d$, $d'$, and $s$, provided $d+d'>0$.

\bigskip
\begin{minipage}{6in}
\begin{fig}\label{figa}

{\bf An element in $C_{2,2}$ and a representation of its paths}

\begin{center}

\begin{\tz}[scale=.55]
\draw (0,0) -- (0,3) -- (3,6);
\draw (0,3) -- (-3,6);
\draw (-2,4) -- (-1,3);
\draw [->] (-1,3) -- (-1,1.5);
\node at (-1.6,2.9) {$y$};
\draw (-1.5,5.5) -- (0,4);
\draw [->] (0,4) -- (1.5,5.5);
\node at (0,4.6) {$x$};
\draw (1,0) -- (16,0);
\node at (16.4,0) {$x$};
\node at (6,5.4) {$y$};
\draw (6,-1) -- (6,5);
\draw (6,0) -- (10,4) -- (16,4);
\draw (6,0) -- (10,2) -- (16,2);
\node at (13,3) {$R$};
\node at (16.4,4) {$s$};
\node at (16.4,2) {$s/2$};
\node at (2,0) {$\ssize\bullet$};
\node at (6,4) {$\ssize\bullet$};
\draw [color=red] (9,1.5) -- (6,4) -- (2,0);
\draw [color=red] (9,3) -- (2,0);
\draw [color=red] (9,2.55) -- (6,2.4) -- (2,0);
\node at (5.7,2.5) {$y$};
\node at (9.2,1.3) {$y_0$};
\node at (9,3.2) {$y_1$};
\node at (9.4,2.55) {$d'$};
\node at (3,6.4) {$1$};
\node at (0,-.4) {$2$};
\node at (-3,6.4) {$3$};
\node at (1,4) {$\ssize\bullet$};
\node at (2,5) {$\ssize\bullet$};
\node at (0,1) {$\ssize\bullet$};
\node at (0,2) {$\ssize\bullet$};
\node at (2.4,5) {$a$};
\node at (1.4,4) {$b'$};
\node at (.4,1) {$b$};
\node at (.4,2) {$a'$};
\end{\tz}
\end{center}
\end{fig}
\end{minipage}

\bigskip

This completes the description of the NGMPR on $E_0$.
We next describe the NGMPR on $E_1$. On $L$, we use linear motion $(a,a')\to (b,b')$ and the constant homotopy.

The NGMPR on $C_{3,1}$ is, at homotopy parameter $s$, given as follows. If $a$, $a'$, and $b$ are on arm $i$ with $d(a,v)<d(a',v)$, and $b'$ is on arm $j$, and arm $k$ is free,
or if $b$, $b'$, and $a$ are on arm $j$ with $d(b,v)<d(b',v)$, and $a'$ is on arm $i$, and arm $k$ is free, then
$$(a,a')\to (0,s_i)\to (s_k,0) \to (0,s_j)\to (b,b').$$
Reversing primed and unprimed reverses the coordinates of the  parts involving 0 and $s$.
This has the property that if $a$ and $b$ are at the vertex, with $a'$ and $b'$ on distinct open arms, the same motion is obtained if it is thought of as $a$, $a'$, and $b$ on an arm or as $b$, $b'$, and $a$ on an arm.

On $X_{1,1,2}$, we would like to use linear motion and the constant homotopy. However, for noncolliding diagrams of any of the three types on the left side of Figure \ref{figc}, this homotopy would not agree, in the limit, with that in the $C_{3,1}$ diagram on the right side, as $a'$ and $b'$ approach $v$. In the first two diagrams, the two letters on an arm may appear in either order.

\bigskip
\begin{minipage}{5.5in}
\begin{fig}\label{figc}

{\bf Three elements of $X_{1,1,2}$ and a limiting element}

\begin{center}

\begin{\tz}[scale=.42]
\draw (0,0) -- (0,3) -- (3,6);
\draw (0,3) -- (-3,6);
\draw (7,0) -- (7,3) -- (10,6);
\draw (7,3) -- (4,6);
\draw (14,0) -- (14,3) -- (17,6);
\draw (14,3) -- (11,6);
\draw (23,0) -- (23,3) -- (26,6);
\draw (23,3) -- (20,6);
\draw [color=red] (18.5,0) -- (18.5,6);
\node at (-2,5) {$\ssize\bullet$};
\node at (-2.5,5) {$b$};
\node at (4.5,5) {$b$};
\node at (11.5,5) {$b$};
\node at (20.5,5) {$b$};
\node at (2,5) {$\ssize\bullet$};
\node at (2.5,5) {$b'$};
\node at (9.5,5) {$b'$};
\node at (1,4) {$\ssize\bullet$};
\node at (1.5,4) {$a'$};
\node at (15.5,4) {$a'$};
\node at (0,1.5) {$\ssize\bullet$};
\node at (.4,1.5) {$a$};
\node at (7.4,1.5) {$a$};
\node at (14.4,1.5) {$a$};
\node at (23.4,1.5) {$a$};
\node at (12.5,4) {$b'$};
\node at (5.5,4) {$a'$};
\node at (22.6,3) {$a'$};
\node at (23.4,3) {$b'$};
\node at (5,5) {$\ssize\bullet$};
\node at (9,5) {$\ssize\bullet$};
\node at (6,4) {$\ssize\bullet$};
\node at (7,1.5) {$\ssize\bullet$};
\node at (12,5) {$\ssize\bullet$};
\node at (15,4) {$\ssize\bullet$};
\node at (13,4) {$\ssize\bullet$};
\node at (14,1.5) {$\ssize\bullet$};
\node at (21,5) {$\ssize\bullet$};
\node at (23,3) {$\ssize\bullet$};
\node at (23,1.5) {$\ssize\bullet$};
\node at (3,6.5) {$i$};
\node at (10,6.5) {$i$};
\node at (17,6.5) {$i$};
\node at (26,6.5) {$i$};
\end{\tz}
\end{center}
\end{fig}
\end{minipage}

\bigskip

We will allow $a$, $a'$, $b$, and $b'$ to denote distance of the corresponding points from the vertex.
The NGMPR for the first of the diagrams of Figure \ref{figc} has $\Phi((a,a'),(b,b'))(s)$ equal to the path described as follows.
Let $M=\frac{\max(a',b')}{\min(a,b)}$. For $s\le M$, it is the linear path $(a,a')\to (b,b')$, which can also be written
$$(a,a')\to (0,(\tfrac b{a+b}a'+\tfrac a{a+b}b')_i) \to (b,b').$$
For $s=2M$, it is the path $(a,a')\to(0,(2M)_i)\to(b,b')$, and for $M\le s\le 2M$, it is the linear interpolation between these two paths.
Note that $s\le1$, so what this really means is that for $s\in[0,1]\cap[M,2M]$ it is the path
$(a,a')\to(0,y_i)\to (b,b')$, where
$$y=(1-\tfrac {s-M}M)(\tfrac b{a+b}a'+\tfrac a{a+b}b')+\tfrac{s-M}M\cdot2M.$$
For $4M\le s\le1$, it is the path
$$(a,a')\to (s_{i+1},0)\to (0,s_i)\to (s_{i+2},0)\to (b,b'),$$
and for $s\in[0,1]\cap[2M,4M]$, it is a linear interpolation between the paths for $2M$ and $4M$, similar to what was done in Figure \ref{figa}.

This satisfies, for all $s\in[0,1]$,
$$\lim_{a',b'\to v}\Phi((a,a'),(b,b'))(s)=\Phi((a,v),(b,v))(s),$$
where the latter is from the $\Phi$ formula on $C_{3,1}$. It also satisfies
$$\lim_{a\to v}\Phi((a,a'),(b,b'))(s)=\Phi((a,v),(b,v))(s),$$
where the latter is the constant linear homotopy from the $\Phi$ formula on $L$, and similarly as $b\to v$.

The same formulation works for the third diagram, using $M=a'/a$. This will approach the $C_{3,1}$ motion as $a'\to v$, and will approach the $L$ motion as $a\to v$. The second diagram of Figure \ref{figc} is slightly more complicated because we have to also worry about the $C_{3,1}$ motion in the limit as $a$ and $b$ approach $v$.

\def\Mbar{\overline{M}}
Let $M=\max(a,b)$ and $M'=\max(a',b')$, and $\Mbar=\min(M,M')$. For $s\le\Mbar$, we use the linear path, which can also be written
$$(a,a')\to ((\tfrac{b'}{a'+b'}a+\tfrac{a'}{a'+b'}b)_{i+1},0)\to (0,(\tfrac b{a+b}a'+\tfrac a{a+b}b')_i)\to (b,b').$$
For $s=2\Mbar$, use $(a,a')\to((2\Mbar)_{i+1},0)\to(0,(2\Mbar)_i)\to (b,b')$, and for $\Mbar\le s\le2\Mbar$, interpolate linearly between these, in the sense described above for the first diagram of Figure \ref{figc}.
For $4M\le s\le1$, the $(a,a')\to(s_{i+1},0)$ part of the path is replaced by $(a,a')\to (0,s_{i+2})\to (s_{i+1},0)$, and for $s\in[0,1]\cap[2\Mbar,4M]$, this part of the path is interpolated. Similarly on the other end, for $4M'\le s\le1$, the $(0,s_i)\to (b,b')$ part has $(s_{i+2},0)$ inserted in the middle, and we interpolate on this side for $s\in[0,1]\cap[2\Mbar, 4M']$. Only one or the other of $M$ and $M'$ can approach 0, and each will approach the appropriate $C_{3,1}$ motion in the limit.

 This completes our NGMPR on $E_1$.
 Thus we have proved the following result, which is an analogue of \cite[Theorem 1(a)]{DHR}.
\begin{thm} If $Y$ denotes the $Y$ graph, then $\GC(F_I(Y,2))=\TC(F(Y,2))=1$.\end{thm}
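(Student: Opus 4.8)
The plan is to prove the two inequalities $\GC(F_I(Y,2))\le 1$ and $\GC(F_I(Y,2))\ge 1$ separately, the first from the explicit construction carried out above and the second by comparison with topological complexity. For the lower bound, recall that the intrinsic metric $d_I$ induces the same topology on $F(Y,2)$ as the Euclidean subspace metric, so $F_I(Y,2)$ and $F(Y,2)$ are the same topological space; since $F(Y,2)\simeq S^1$ we have $\TC(F(Y,2))=1$ (\cite[Theorem 1(a)]{DHR}). As noted after Definition \ref{NGMPRdef}, for any metric space the evaluation at homotopy parameter $s=\tfrac12$, namely $(x_0,x_1)\mapsto\Phi(x_0,x_1)(\tfrac12)$, turns an NGMPR into an ordinary motion planning rule, so $\TC(X)\le\GC(X)$; applied here this gives $\GC(F_I(Y,2))\ge\TC(F(Y,2))=1$.

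For the upper bound I would assemble the work above into a single statement: the six sets $C_{1,1,2}$, $X_{1,1,2}$, $C_4$, $C_{2,2}$, $C_{3,1}$, $L$ partition $F_I\times F_I$, and they regroup into the two domains $E_0=C_{1,1,2}\cup C_4\cup C_{2,2}$ and $E_1=X_{1,1,2}\cup C_{3,1}\cup L$. It must then be checked that $E_0$ and $E_1$ are locally compact and that the formulas displayed above define a single continuous NGMPR on each. Exhibiting NGMPRs on a partition of $F_I\times F_I$ into two locally compact pieces is exactly the assertion $\GC(F_I(Y,2))\le 1$, and combining this with the lower bound yields the stated chain of equalities.

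The main obstacle is the verification of continuity of $\Phi$ on each of $E_0$ and $E_1$, and this is where essentially all the content lies. Within each domain the constituent sets are not closed, so a sequence in one of them can converge to a point of $F_I\times F_I$ lying in a different set of the \emph{same} domain, typically as one or more of $a,a',b,b'$ approach the vertex $v$. I would verify continuity by checking, for each such degeneration, that the NGMPR formula on the source set tends, at every $s\in[0,1]$, to the NGMPR formula on the limiting set. This is precisely the purpose of the boundary computations above: the matching of the $C_{1,1,2}$ rule with the $C_4$ rule as $a'$ and $b$ approach $v$; the two prescribed limits for the second type of $C_{2,2}$ diagram, governed by the interpolation across the region $R$ of Figure \ref{figa}; and, on $E_1$, the matching of the three $X_{1,1,2}$ rules with the $C_{3,1}$ rule as the relevant points approach $v$ (via the parameters $M$, $M'$, $\overline{M}$), together with the matching with the constant $L$ homotopy at the opposite degeneration. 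One must also confirm that in each region the displayed path is genuinely a near-geodesic in the sense of Definition \ref{nogeo}: that $\Phi(x_0,x_1)(0)$ is the geodesic in $\overline{F_I(Y,2)}$ through $(v,v)$ or through a point $(x,x)_i$, that $\Phi(x_0,x_1)((0,1])$ stays in $F_I$ (no collision), and that lengths converge as $s\to 0$.

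Finally, the regrouping into $E_0$ and $E_1$ rather than six separate domains is itself a claim requiring the topological-disjointness bookkeeping indicated above: one checks that no sequence in one set of a given domain converges to a point of another set in the same domain for which the two prescribed formulas disagree — equivalently, that the formulas glue continuously across all of $E_0$ and across all of $E_1$. Granting these continuity and gluing verifications, which are exactly the region-by-region checks performed above, the theorem follows at once from the two bounds.
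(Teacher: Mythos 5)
Your proposal is correct and takes essentially the same approach as the paper: the upper bound $\GC(F_I(Y,2))\le 1$ comes from the partition into the two domains $E_0=C_{1,1,2}\cup C_4\cup C_{2,2}$ and $E_1=X_{1,1,2}\cup C_{3,1}\cup L$ carrying the explicit NGMPRs with their boundary-matching continuity checks, while the lower bound comes from $\TC\le\GC$ (evaluation of an NGMPR at homotopy parameter $\tfrac12$, using that $d_I$ induces the same topology) together with the known value $\TC(F(Y,2))=1$. This is precisely how the paper assembles its preceding construction into the stated theorem.
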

We see that for these graph configuration spaces, the analysis of GC is  closely related to the analysis of $\GC(F_\eps(G,2))$ in \cite{DHR}, but the perspective is quite different.

\def\line{\rule{.6in}{.6pt}}

\end{document}